\def\section{\@startsection{section}{1}%
  \z@{.5\linespacing\@plus.7\linespacing}
{.8\baselineskip}%
  {\normalfont\fontsize{11}{13}\centering\bfseries}%
}
\def\subsection{\@startsection{subsection}{2}%
  \z@{.4\linespacing\@plus.7\linespacing}
{.6\baselineskip}%
  {\normalfont\centering\bfseries}%
}
\newcommand{\supp}{\operatorname{supp}}
\newtheorem{theorem}{Theorem}[section]
\newtheorem{proposition}[theorem]{Proposition}
\newtheorem{lemma}[theorem]{Lemma}
\newcounter{thmcounter}
\newcounter{introthmcounter}
\newtheorem{corollary}[theorem]{Corollary}
\theoremstyle{definition}
\newtheorem*{definition*}{Definition}
\newtheorem{question}[theorem]{Question}
\newtheorem*{question*}{Question}
\newcounter{proofcount}
\theoremstyle{remark}
\newtheorem{remark}[theorem]{Remark}
\newtheorem*{remark*}{Remark}
\def\R{{\mathbb R}}
\def\Z{{\mathbb Z}}
\def\N{{\mathbb N}}
\def\T{{\mathbb T}}
\newcommand*\diff{\mathop{}\!\mathrm{d}}
\def\cB{{\mathcal B}}
\title[Infinite sumsets of the form $B+B$ in sets with large density]{Infinite unrestricted sumsets of the form $B+B$ in 
sets with large density}
\author{Ioannis Kousek}
\address{Department of mathematics, University of Warwick}
\email{ioannis.kousek@warwick.ac.uk}
\author{Trist\'an Radi\'c}
\address{Department of mathematics, Northwestern University}
\email{tristan.radic@u.northwestern.edu}
\begin{document}
\maketitle

\begin{abstract}

For a set $A \subset \N$ we characterize in terms of its 
density when there exists an infinite set 
$B \subset \N$ and $t \in \{0,1\}$ such that $B+B \subset A-t$, 
where $B+B : =\{b_1+b_2\colon  b_1,b_2 \in B\}$. 
Specifically, when the lower density 
$\underline{\diff}(A) >1/2$ 
or 
the upper density $\overline{\diff}(A)> 3/4$, the existence of 
such 
a set  $B\subset \N$ and $t\in \{0,1\}$ is assured. 
Furthermore, whenever $\underline{\diff}(A) > 3/4$ or $\overline{\diff}(A)>5/6$, we show that the shift $t$ is 
unnecessary and we also provide examples to show that these bounds are 
sharp. Finally, we construct a syndetic three-coloring of the 
natural numbers that does not contain a monochromatic $B+B+t$ 
for any infinite set $B \subset \N$ and number $t \in \N$.
\end{abstract}

\section{Introduction} \label{Intro}

In an effort to extend Hindman's theorem \cite[Theorem 3.1]{Hindman74},
Erd\H{o}s conjectured the following statement (for example in  
\cite{Erdos77,Erdos80,Erdos_Turan_seq_intergers:1936}), which 
was recently resolved in 
\cite{Kra_Moreira_Richter_Robertson_BBt}. 

\begin{theorem} \label{KMRR theorem}  \cite[Theorem 1.2]{Kra_Moreira_Richter_Robertson_BBt}
For any $A\subset \N$ with positive upper (Banach) density there exists some infinite set $B\subset \N$ and a number $t\in \N$ such that 
$$B \oplus B =\{b_1 + b_2\colon  b_1,b_2\in B,\ b_1 \neq b_2\} \subset 
A-t.$$
\end{theorem}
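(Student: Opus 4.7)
The plan is to translate the combinatorial statement into ergodic theory via Furstenberg's correspondence principle. Given $A\subset\N$ with $\bar{d}(A)>0$, one produces an invertible probability-preserving system $(X,\cB,\mu,T)$ and a measurable set $E\subset X$ with $\mu(E)=\bar{d}(A)$ satisfying
\[
\bar{d}\bigl((A-n_1)\cap\cdots\cap(A-n_k)\bigr)\;\geq\;\mu\bigl(T^{-n_1}E\cap\cdots\cap T^{-n_k}E\bigr)
\]
for every finite collection $n_1,\ldots,n_k\in\N$. It therefore suffices to exhibit an infinite sequence $b_1<b_2<\cdots$ and an integer $t$ such that
\[
\mu\Bigl(\bigcap_{i<j\leq k}T^{-(b_i+b_j+t)}E\Bigr)>0 \quad \text{for every } k\in\N,
\]
after which a standard diagonalization supplies an infinite $B$ with $B\oplus B\subset A-t$.

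Next I would construct $(b_n)$ inductively using weak limits on an appropriate characteristic factor. Decompose $\mathbf{1}_E=\phi+\psi$, where $\phi$ is the projection onto the Kronecker (or, if necessary, a higher-step Host--Kra pronilfactor) and $\psi$ lies in its orthogonal complement. Almost-periodicity of $\phi$ implies that for every $\varepsilon>0$ the set $S_\varepsilon=\bigl\{s\in\N:\|T^s\phi-\phi\|_2<\varepsilon\bigr\}$ is syndetic. First I would fix $t\in S_\varepsilon$, ensuring $\langle\phi,T^{-t}\phi\rangle$ is within $\varepsilon\|\phi\|_2$ of $\|\phi\|_2^2$ by Cauchy--Schwarz. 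Then I would select $b_1<b_2<\cdots$ inside rapidly shrinking sets $S_{\varepsilon_n}$, each $b_n$ taken large enough that for every prior index $i<n$ the pairwise correlation $\int\mathbf{1}_E\cdot T^{b_i+b_n+t}\mathbf{1}_E\,d\mu$ is approximated on the characteristic factor by $\langle\phi,T^{-t}\phi\rangle$ with error at most $\varepsilon_n$, using weak-$\ast$ compactness in $L^2(\mu)$ to extract the subsequence at each stage.

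The main obstacle is that pairwise positivity does not automatically give positivity of the $k$-fold intersection: the pairwise correlations $\mu(E\cap T^{-(b_i+b_j+t)}E)$ are at best comparable to $\mu(E)^2$, which is strictly smaller than $\mu(E)$, so a naive union bound on symmetric differences fails already for moderate $k$. Overcoming this requires working on the characteristic factor directly and arranging that the products $\prod_{i<j\leq k} T^{b_i+b_j+t}\phi$ stay close in $L^1$ to a positive limit -- concretely, all the translates $T^{b_i+b_j+t}\phi$ cluster near $\phi$ along a well-chosen subsequence, while the contribution of the weakly mixing $\psi$ is shown to vanish in the limit via a van der Corput type argument. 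Establishing this multilinear control for averages indexed by sums $b_i+b_j$ is the technical heart of the argument and is precisely where the machinery of characteristic factors for nonconventional ergodic averages must be brought to bear. Finally, separating $B\oplus B$ from $B+B$ is automatic once the $b_n$ are chosen to grow sufficiently fast so that the doubled sums $2b_i+t$ never coincide with any mixed sum $b_j+b_\ell+t$.
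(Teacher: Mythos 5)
First, note that the paper does not prove \cref{KMRR theorem}; it is quoted directly from \cite{Kra_Moreira_Richter_Robertson_BBt} as motivation, so there is no internal proof to compare against. Assessing your sketch on its own terms, there is a genuine gap precisely at the bridge from measure positivity to the combinatorial conclusion. The version of Furstenberg's correspondence principle you invoke gives only
\[
\overline{\diff}\bigl((A-n_1)\cap\cdots\cap(A-n_k)\bigr)\ \geq\ \mu\bigl(T^{-n_1}E\cap\cdots\cap T^{-n_k}E\bigr),
\]
so from $\mu\bigl(\bigcap_{i<j\leq k}T^{-(b_i+b_j+t)}E\bigr)>0$ you may conclude only that each finite intersection $\bigcap_{i<j\leq k}\bigl(A-(b_i+b_j+t)\bigr)$ is nonempty, i.e.\ that for every $k$ there exists \emph{some} $m_k\in\N$ with $m_k+b_i+b_j+t\in A$ for all $i<j\leq k$. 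The witnesses $m_k$ can escape to infinity with $k$, and no diagonalization then produces a single infinite $B$ with $B\oplus B\subset A-t$. What is actually required is that the particular point $a=\1_A$ in the shift space lie in each $T^{-(b_i+b_j+t)}E$, i.e.\ that $b_i+b_j+t\in A$ outright; this is a pointwise statement about $a$ and cannot be extracted from a lower bound on the measure. The proof in \cite{Kra_Moreira_Richter_Robertson_BBt} circumvents exactly this issue by working on the orbit closure of $a$, taking $a$ generic for a suitable ergodic component, and producing an Erd\H{o}s progression $(a,x_1,x_2)$ anchored at that specific point; it is then \cref{EP=>SUMSETS} (quoted in the present paper), not the naive correspondence inequality, that converts the dynamical statement into the existence of $B$ with $B\oplus B\subset A-t$.

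Second, the multilinear control you yourself flag as ``the technical heart'' is not supplied. Almost-periodicity of $\phi$ together with a van der Corput bound for $\psi$ governs the two-fold correlations $\int\1_E\cdot T^{b_i+b_j+t}\1_E\,d\mu$, but positivity of the full $k$-fold intersection $\bigcap_{i<j\leq k}T^{-(b_i+b_j+t)}E$ is a much stronger statement, and in any case is not what the published argument establishes or needs: the KMRR proof stays entirely within a two-point structure, namely the measure $\sigma_a=\int_Z\eta_z\times\eta_{2z-\pi(a)}\,dm(z)$ built over the Kronecker factor, from which a single Erd\H{o}s progression is extracted (no higher Host--Kra pronilfactors enter). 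Finally, your closing remark about ``separating $B\oplus B$ from $B+B$'' is misplaced: the diagonal is already excluded by the definition of $B\oplus B$, so there is nothing to separate; the genuine gap between $B\oplus B$ and $B+B$ is precisely the subject of the present paper and requires additional hypotheses on the density of $A$.
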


The restriction $b_1 \neq b_2$ in the sumset 
 is necessary and the nature of the problem changes substantially when it is removed. For instance, it is not hard to exhibit a set $A \subset \N$ of full upper Banach 
density and positive lower density that does not contain $B+B+t$, where $B+B=\{b_1+b_2: b_1,b_2 \in B\}$, 
for any infinite set $B\subset \N$ and any $t\in \N$ (see \cite[Example 2.3]{Kra_Moreira_Richter_Robertson_problems}).

On a relevant direction, it turns out that the structure $B+B$ for infinite 
$B\subset \N$ is not partition regular and this was 
first shown by Hindman in \cite{Hindman79}, who produced a 
three-coloring of $\N$ that does not contain monochromatic 
$B+B$ for any infinite set $B \subset \N$. The 
analogous problem for two-colorings of $\N$ is 
still open (see \cite[Problem E2494]
{Owing_problems}).

The focus of this paper is to provide conditions on the density of a set $A\subset \N$, 
which ensure the existence of an infinite set $B\subset \N$ such that $B+B \subset A-t$, for some $t\in \{0,1\}$, where we emphasize that repeated entries are permitted. These conditions turn out to be considerably more 
restrictive than those of \cref{KMRR theorem}. 

For additional results in infinite 
sumsets we refer the reader to \cite{DNGJLLM, 
host2019short,Moreira_Richter_Robertson19, nathanson1980sumsets}. We also mention that \cref{KMRR theorem} was recently generalized to a broad family of countable amenable groups in \cite{charamaras_mountakis2024finding}. 

To state the main results precisely we recall some standard definitions.
A set $A\subset \N$ has upper density given by 
$$\overline{\diff}(A)=\limsup_{N\to \infty} \frac{|A \cap [1,N]|}{N}$$
and lower density given by 
$$\underline{\diff}(A)=\liminf_{N\to \infty} \frac{|A \cap [1,N]|}{N},$$
where $[1,N]=\{1,\ldots,N\}$, for any $N\in \N$. When the limit 
exists we say that $A\subset \N$ has (natural) density $\diff(A)$, given by 
the common value of $\overline{\diff}(A)$ and $\underline{\diff}(A)$.

The known examples of integer sets not containing infinite sumsets of the form $B+B$ were sets of 
upper density less than $2/3$ and lower density less than $1/2$, but arbitrarily close to these values (see 
\cite[Example 3.6]{Kra_Moreira_Richter_Robertson_problems}). 
This led B. Kra, J. Moreira, F. Richter and D. 
Robertson to conjecture that these bounds were in a sense sharp 
\cite[Conjecture 3.7]
{Kra_Moreira_Richter_Robertson_problems}. In \cref{thrm B+B+t} we show that this holds once we allow a 
potential shift $t\geq 0$, which can actually be 
restricted to $t\in \{0,1\}$. We are able to remove this shift, as 
shown in \cref{thrm B+B}, once we increase the lower bounds on the 
densities and in 
\cref{sec even numbers} we show that this increase is 
necessary. 

Our first main result is the following theorem.

\begin{theorem} \label{thrm B+B}
     Let $A \subset \N$. 
    \begin{enumerate}
    \item If $\overline{\diff}(A)>5/6,$ there is an infinite set $B\subset \N$ such that $B+B \subset A$.
    \item \label{point 2 thrm} If $\underline{\diff}(A)>3/4,$ there is an infinite set $B\subset \N$ such that $B+B \subset A$.
\end{enumerate}
\end{theorem}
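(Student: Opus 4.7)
The plan is to deduce both parts from \cref{thrm B+B+t}.

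For Part (2), I apply \cref{thrm B+B+t} to the auxiliary set $\tilde A := A \cap (A-1)$. By the inclusion-exclusion inequality for lower density, $\underline{\diff}(\tilde A) \geq 2\underline{\diff}(A) - 1 > 1/2$, so \cref{thrm B+B+t} produces an infinite $B_0 \subset \N$ and $t \in \{0,1\}$ with $B_0 + B_0 \subset \tilde A - t$. If $t = 0$, then $B := B_0$ satisfies $B + B \subset \tilde A \subset A$. If $t = 1$, then $B_0 + B_0 \subset (A-1) \cap (A-2)$, and setting $B := B_0 + 1$ yields $B + B = B_0 + B_0 + 2 \subset A$.

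For Part (1) the same move is obstructed, since upper density does not obey inclusion-exclusion: one only obtains $\overline{\diff}(A \cap (A-1)) \geq 2\overline{\diff}(A) - 1 > 2/3$, short of the $3/4$ threshold of \cref{thrm B+B+t}. Instead, my plan is to restrict $B$ to one residue class modulo $2$. Setting $\tilde E := \{n \in \N : 2n \in A\}$, a direct count gives $\overline{\diff}(\tilde E) \geq 2\overline{\diff}(A) - 1 > 2/3$. Searching for $B$ of the form $2B'$ reduces the target $B + B \subset A$ to $B' + B' \subset \tilde E$, and the form $2B'+1$ reduces it to $B' + B' \subset \tilde E - 1$. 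Hence it suffices to produce an infinite $B' \subset \N$ with $B' + B' \subset \tilde E - t$ for some $t \in \{0, 1\}$.

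The main obstacle is to bridge this final step, since $\overline{\diff}(\tilde E) > 2/3$ is still below the $3/4$ threshold of \cref{thrm B+B+t} and a single further application does not close the argument. I would proceed by invoking \cref{KMRR theorem} on $\tilde E$ (which requires only positive upper Banach density) to obtain an infinite $B_1$ and $s \in \N$ with $B_1 \oplus B_1 + s \subset \tilde E$. A parity case-split on $s$, combined with a translation of $B_1$ by roughly $s/2$, should absorb the shift and upgrade this to $B_1' + B_1' \subset \tilde E - t$ for some $t \in \{0, 1\}$; the remaining diagonal constraint, that $2b \in \tilde E - t$ for all $b \in B_1'$, is where the extra density slack coming from the stronger hypothesis $\overline{\diff}(A) > 5/6$ (well above the $3/4$ consumed so far) will be used, via passage to a further infinite subset of $B_1'$.
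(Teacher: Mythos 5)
Your deduction of part~(2) from \cref{thrm B+B+t} via $\tilde A := A \cap (A-1)$ is correct, and it is a genuinely different route from the paper's: the paper proves \cref{thrm B+B} and \cref{thrm B+B+t} independently, each from its own dynamical statement (\cref{Dynamical B+B} and \cref{Dynamical B+B+t}) together with the correspondence principle of \cref{FCP}, whereas you derive \cref{thrm B+B}(2) from \cref{thrm B+B+t}(2) by a purely combinatorial reduction. This is more elementary, and since the same bound $|\tilde A \cap [1,N]| \ge 2|A\cap[1,N]| - N - 1$ gives $\overline{\diff}(\tilde A) \ge 2\overline{\diff}(A)-1$ as well as $\underline{\diff}(\tilde A) \ge 2\underline{\diff}(A)-1$, the same reduction also recovers \cref{thrm B+B d_ and d^} from \cref{B+B+t d_ and d^}.

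For part~(1), however, you have the threshold of \cref{thrm B+B+t}(1) wrong: it requires $\overline{\diff} > 2/3$, not $3/4$. With the correct threshold, the argument you already set up closes immediately. Your estimate $\overline{\diff}(\tilde E) \ge 2\overline{\diff}(A) - 1 > 2/3$ lets you apply \cref{thrm B+B+t}(1) directly to $\tilde E$, producing an infinite $B'$ and $t \in \{0,1\}$ with $B'+B'+t \subset \tilde E$; then $B := 2B'+t$ satisfies $B+B = 2(B'+B'+t) \subset A$. (The intersection set $A\cap(A-1)$ works equally well here, since $\overline{\diff}(A\cap(A-1)) \ge 2\overline{\diff}(A)-1 > 2/3$ by the same inequality.) The detour through \cref{KMRR theorem} is therefore unnecessary, and as sketched it also contains a real gap: \cref{KMRR theorem} only gives $B_1 \oplus B_1 + s \subset \tilde E$ for an uncontrolled shift $s\in\N$, not $s\in\{0,1\}$, and the diagonal constraint $\{2b : b \in B_1'\} \subset \tilde E - t$ cannot be forced by ``density slack'': the set $\{2b : b \in B_1'\}$ has density zero, so it can avoid a fixed set of density $>5/6$ entirely, and passing to a further infinite subset of $B_1'$ does not remedy this.
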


When we allow for a shift $t \geq 0$, we can weaken the assumptions on the density.

\begin{theorem} \label{thrm B+B+t}
    Let $A \subset \N$. 
    \begin{enumerate}
    \item If $\overline{\diff}(A)>2/3,$ there is an infinite set $B\subset \N$ and $t\in \{0,1\}$ such that $B+B+t \subset A$.
    \item \label{point 2 thrm +t} If $\underline{\diff}(A)>1/2,$ there is an infinite set $B\subset \N$ and $t\in \{0,1\}$ such that $B+B+t \subset A$.
\end{enumerate}
\end{theorem}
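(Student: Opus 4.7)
The plan is to reduce Theorem~\ref{thrm B+B+t} to Theorem~\ref{KMRR theorem} by separating the diagonal sums $\{2b+t : b \in B\}$ from the off-diagonal sums $B \oplus B + t$. Theorem~\ref{KMRR theorem} already produces the off-diagonal inclusion (after a routine parity reduction of the shift), while the density surplus over $1/2$ (respectively $2/3$) should enable the diagonal condition.

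First I would analyse the \emph{diagonal constraint set} $E_t := \{n \in \N : 2n + t \in A\}$ for $t \in \{0,1\}$, since any $B$ with $B + B + t \subset A$ must satisfy $B \subset E_t$. Partitioning $[2, 2N+1]$ into its even and odd parts gives the identity
\[
|E_0 \cap [1,N]| + |E_1 \cap [1,N]| = |A \cap [2, 2N+1]|.
\]
Under $\underline{\diff}(A) > 1/2$, this identity together with the trivial bound $|E_t \cap [1,N]| \leq N$ yields $\underline{\diff}(E_t) \geq 2\underline{\diff}(A) - 1 > 0$ for \emph{both} $t \in \{0,1\}$. Under $\overline{\diff}(A) > 2/3$, the analogous limsup calculation gives $\overline{\diff}(E_{t_0}) > 2/3$ for at least one $t_0 \in \{0,1\}$. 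In either case the diagonal constraint set associated with some suitable $t \in \{0,1\}$ has positive density.

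To produce $B$ I would invoke a refinement of Theorem~\ref{KMRR theorem} that delivers an infinite $B \subset E_t$ with $B \oplus B \subset A - t$, where $t \in \{0,1\}$ is the value selected above. The shift $t'$ output by the original Theorem~\ref{KMRR theorem} is first brought to $\{0,1\}$ via the translation $B \mapsto B + \lfloor t'/2 \rfloor$, which replaces $t'$ by $t' \bmod 2$; combining this with $B \subset E_t$ then gives $B + B + t \subset A$ as required.

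The principal obstacle I foresee is the coordinated control of both the \emph{location} of $B$ (inside $E_t$) and the \emph{parity} of the final shift. The standard form of Theorem~\ref{KMRR theorem} offers no control over the location of $B$, and the parity-reducing translation displaces $B$ out of any pre-prescribed $E_t$. I would address this by revisiting the dynamical / ultrafilter proof of Theorem~\ref{KMRR theorem}, exploiting the positive lower (respectively upper) density of $E_t$ from the first step to arrange that the return times producing $B$ lie in a translate of $E_t$ matching the final parity of the shift. The extra density slack over $1/2$ (or $2/3$) is precisely what should make this coordination possible.
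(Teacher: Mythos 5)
Your reduction is organized around the right objects, and two of its three steps are sound: the density computation for $E_t := \{n : 2n+t \in A\}$ is correct (the identity $|E_0 \cap [1,N]| + |E_1 \cap [1,N]| = |A \cap [2,2N+1]|$ does give $\underline{\diff}(E_t) \geq 2\underline{\diff}(A)-1$ for both $t$, and the limsup version gives a large $\overline{\diff}(E_{t_0})$ for some $t_0$); and the final combination is right, since $B \subset E_t$ together with $B \oplus B \subset A - t$ does yield $B + B + t \subset A$.

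The gap is the middle step, and you have located it yourself but not filled it. You ``invoke a refinement of Theorem~\ref{KMRR theorem}'' that outputs an infinite $B \subset E_t$ with $B \oplus B \subset A - t$, and when you notice that no such refinement is available off the shelf and that the parity-reducing translation $B \mapsto B + \lfloor t'/2 \rfloor$ destroys the containment $B \subset E_t$, you propose ``revisiting the dynamical proof.'' But that refinement is precisely the theorem to be proved; it is not a routine strengthening. The KMRR machinery is built on a single shift $(\Sigma, S)$ and its generic points track only the orbit $n \mapsto S^n a$, so there is no mechanism in it to constrain where $B$ sits while also constraining $B \oplus B$. Moreover, ``$E_t$ has positive density'' cannot be the right quantitative input: the thresholds $1/2$ and $2/3$ are sharp (Proposition~\ref{large upper density without B+B}, Proposition~\ref{large lower density withou B+B }), so a positivity condition with lots of slack would contradict sharpness if it sufficed. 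The paper resolves exactly this coordination problem by passing to the product system $(\Sigma \times \Sigma, S^2 \times S)$ with a modified Furstenberg correspondence (Lemma~\ref{FCP 2}): making $(a,a)$ generic for an $S^2\times S$-invariant measure ties the orbit of $S^{2n}a$ (controlling $B \subset E_t$) to that of $S^n a$ (controlling $B\oplus B \subset A - t$) through a single Erd\H{o}s progression, and the measure inequality $2\mu(\Sigma\times E) + \mu(E\times\Sigma) + \mu(S^{-1}E\times\Sigma) > 2$ is the precise quantitative form of your ``density slack.'' Without some device of this kind, or an explicit replacement, the proof does not close.
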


To prove Theorems \ref{thrm B+B} and 
\ref{thrm B+B+t} we use the tools developed in  
\cite{Kra_Moreira_Richter_Robertson_infinitesumsets, Kra_Moreira_Richter_Robertson_BBt} and translate the problem of finding infinite configurations to a statement in ergodic theory. As was shown in \cite{Kra_Moreira_Richter_Robertson_BBt} the containment $B \oplus B \subset A$, for some infinite $B\subset \N$, is related to the existence of a particular system $(X,T)$ and specific points $a,x_1,x_2 \in X$ such that for some increasing sequence $(n_k)_{k \in \N}$ we have $\left( T \times T \right) ^{n_k} \left( a, x_1 \right) \to (x_1,x_2)$ (see \cite[Theorem 2.2]{Kra_Moreira_Richter_Robertson_BBt}). In the present work, we also need that $T^{2n_k}a \to x_2$ in order to ensure that $2B : = \{ 2b : b \in B\} \subset A$. From these two facts we can conclude that $B+B \subset A$. Given that we care simultaneously about the convergence under 
iterations of $T$ and $T^2$, we need a modified version of  Furstenberg's correspondence principle, where instead of building an appropriated ergodic invariant measure in the full shift in two symbols, $(\{0,1\}^{\Z},T)$, we build  an appropriated 
$T^2\times T$-invariant probability measure in 
$(\{0,1\}^{\Z} \times \{0,1\}^{\Z}, T^2 \times T)$. This is 
carried out in Lemmata \ref{FCP} and \ref{FCP 2} and the proofs of Theorems \ref{thrm B+B} and \ref{thrm B+B+t} are presented in \cref{ sec 3}.

The phenomena captured by Theorems \ref{thrm B+B} and \cref{thrm B+B+t} are directly related to the distribution of a set $A$ along the 
even numbers or its distribution along the odd numbers. More 
precisely, we show the following corollary of \cref{thrm B+B}. 

\begin{corollary} \label{thrm even and odd}
    Let $A \subset \N$. 
    
    If one of the following holds:
    \begin{enumerate}
        \item \label{pt 1 even odd} $\overline{\diff} (A \cap 2\N) > 1/3$ or
        \item \label{pt 2 even odd} $\underline{\diff} (A \cap 2\N) > 1/4$, 
    \end{enumerate}
then there is an infinite set $B \subset \N$ such that $B + B \subset A$. 
Similarly,  if one of the following holds:

\begin{enumerate} 
    \setcounter{enumi}{2}
        \item \label{pt 3 even odd} $\overline{\diff} (A \cap (2\N+1)) > 1/3$ or
        \item \label{pt 4 even odd} $\underline{\diff} (A \cap (2\N+1)) > 1/4$, 
\end{enumerate}  
then there is an infinite set $B \subset \N$ such that $B + B + 1 \subset A$.

\end{corollary}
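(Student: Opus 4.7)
The plan is to reduce each case to \cref{thrm B+B+t} applied to a suitably rescaled set. For parts~\ref{pt 1 even odd} and~\ref{pt 2 even odd}, I introduce
\[
A' := \{n \in \N : 2n \in A\}.
\]
A direct count shows $\overline{\diff}(A') = 2\,\overline{\diff}(A \cap 2\N)$ and $\underline{\diff}(A') = 2\,\underline{\diff}(A \cap 2\N)$, so the hypotheses translate precisely to $\overline{\diff}(A') > 2/3$ or $\underline{\diff}(A') > 1/2$. Applying \cref{thrm B+B+t} to $A'$ then produces an infinite set $B' \subset \N$ and a shift $t \in \{0,1\}$ with $B' + B' + t \subset A'$. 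Setting $B := 2B' + t$, the sumset becomes
\[
B + B = 2(B'+B'+t) \subset 2A' \subset A,
\]
which is the desired conclusion.

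For parts~\ref{pt 3 even odd} and~\ref{pt 4 even odd} I carry out the same construction with the odd translate:
\[
A' := \{n \in \N : 2n+1 \in A\},
\]
whose upper and lower densities are again twice those of $A \cap (2\N+1)$ by the analogous count. Applying \cref{thrm B+B+t} I get $B' + B' + t \subset A'$ for some $t \in \{0,1\}$, and choosing $B := 2B' + t$ yields
\[
B + B + 1 = 2(B'+B'+t) + 1 \subset 2A' + 1 \subset A.
\]

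The key maneuver is absorbing the shift $t \in \{0,1\}$ coming from \cref{thrm B+B+t} into a parity-preserving shift of $B$ itself: passing from $B'$ to $2B'+t$ translates the double sumset by $2t$, which is exactly the factor that matches up after doubling. I do not foresee a substantive obstacle: the argument is essentially a change of variables through the $2$-to-$1$ map $n \mapsto 2n$ (respectively $n \mapsto 2n+1$), and the only mild care needed is the verification that both the upper and lower densities double under this restriction-and-rescaling.
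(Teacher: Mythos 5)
Your proof is correct, and it takes a genuinely different route from the paper's. The paper proves the statement via \cref{thrm A cap 2D} (with $D=\N$): it augments $A\cap 2\N$ by adjoining \emph{all} odd numbers, which by sub/super-additivity of densities boosts $\overline{\diff}$ above $5/6$ (resp. $\underline{\diff}$ above $3/4$), then applies the shift-free \cref{thrm B+B} and uses a parity-pigeonhole argument (pass to an infinite subset of $B$ of fixed parity) to force $B+B$ back into the even part $A\cap 2\N$. You instead rescale: passing to $A'=\{n:2n\in A\}$ doubles both densities, so \cref{thrm B+B+t} (the \emph{shifted} theorem, with its weaker thresholds $2/3$ and $1/2$) applies, and the shift $t\in\{0,1\}$ is then absorbed by the substitution $B=2B'+t$, which doubles it to $2t$ — exactly what is needed for $B+B=2(B'+B'+t)$ to land in $2A'=A\cap 2\N$. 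Your approach is cleaner in that it avoids the after-the-fact parity reduction, and it clarifies that \cref{thrm even and odd} is literally a rescaled avatar of \cref{thrm B+B+t}. The paper's approach, by contrast, routes through \cref{thrm B+B} and hence is the one that naturally extends to the more general $D$ in \cref{thrm A cap 2D} and the power-of-two refinement \cref{prop 2n}, where the adjoin-and-pigeonhole template iterates straightforwardly. Both arguments are sound; the one genuinely routine point you flag (that the restriction-and-rescaling map doubles upper and lower density) does indeed hold, since $|A'\cap[1,N]|=|(A\cap 2\N)\cap[1,2N]|$ and the $\limsup$/$\liminf$ of $|E\cap[1,M]|/M$ along even $M$ agrees with the unrestricted one.
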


We prove \cref{thrm even and odd} in \cref{sec density even numbers}. In \cref{subsec examples} we establish that the 
density bounds listed in Theorems \ref{thrm B+B} and \ref{thrm B+B+t} 
are 
sharp. Specifically, we construct a set 
$A\subset \N$ with $\overline{\diff}(A)=5/6$ which does not 
contain a sumset $B+B$, for any infinite $B\subset \N$. 
Moreover, we construct a set $A\subset \N$ with 
$\overline{\diff}(A)=2/3$, which does not contain any shifted 
infinite sumset $B+B+t$, for any $t\in \N$. To complete the 
picture, we give 
analogous constructions of sets $A,A'\subset \N$ with 
$\underline{\diff}(A)=3/4$ and $\underline{\diff}(A')=1/2$,
that do not contain infinite sumsets of the form $B+B$ and $B+B+t$, respectively. We note that for the latter 
two sets, $A$ and $A'$, their lower densities are in fact 
natural densities.

The aforementioned examples include \emph{thick} 
sets, i.e. sets which contain arbitrarily large discrete 
intervals.
This lead us to the natural question whether for any \emph{syndetic} set $A\subset \N$ there exists an infinite set $B\subset \N$ and some $t\in \N$ such that $B+B+t \subset A$. We recall that a set $A\subset \N$ is called syndetic if its complement is not thick or, equivalently, if there is a number $d \in \N$ such that $\N \subset A \cup (A-1) \cup \ldots \cup (A-d)$, where $d$ is called the gap of $A$.  We answer this question in the negative and consequently provide an alternative proof of Hindman's result 
\cite{Hindman79} on three-colorings. We note that, all colors being syndetic, this is a 
sharpening of Hindman's construction.

\begin{proposition} \label{cor 3 coloring synsetic}
There exists a coloring $\N=C_1 \cup C_2 \cup C_3$ where the sets $C_1,C_2,C_3$ are all syndetic and there is no monochromatic sumset $B+B$ for any infinite set $B\subset \N$.   
\end{proposition}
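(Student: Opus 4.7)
The plan is to construct the coloring $\N = C_1 \sqcup C_2 \sqcup C_3$ explicitly via a block-based scheme, modeled after Hindman's \cite{Hindman79} example but modified so that every color is syndetic. Fix a rapidly growing sequence of integers $(N_k)_{k\geq 0}$ (say $N_0=0$ and $N_{k+1} \geq N_k^{10}$) and partition $\N$ into blocks $I_k = (N_k, N_{k+1}]$. Assign to each block a \emph{dominant color} $\sigma(k) \in \{1,2,3\}$ cycling through the three colors, for example $\sigma(k) \equiv k+1 \pmod 3$. Within $I_k$, colour the bulk of the positions with $\sigma(k)$; at positions of period $L$ (for a fixed small $L \geq 3$) sprinkle the two non-dominant colors in a cyclic pattern, so that each of the three colors appears at least once in every subinterval of length $2L$.

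Syndeticity of each $C_i$ then follows immediately from the reservation stripe: each color appears in every window of length $2L$, so each $C_i$ has gap at most $2L$.

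The heart of the argument is to show that no monochromatic $B+B$ exists for an infinite set $B$. Suppose for contradiction $B+B \subset C_i$ for some $i$ and some infinite $B$. Since $2B \subset B+B$, also $2B \subset C_i$. The rapid growth of $(N_k)$ guarantees that for $b \in B \cap I_k$ the element $2b$ lies in $I_k \cup I_{k+1}$, and that for $b_1 \in B \cap I_k$, $b_2 \in B \cap I_\ell$ with $k \leq \ell$, the sum $b_1 + b_2$ lies in $I_\ell \cup I_{\ell+1}$. Since $\sigma$ cycles modulo $3$, at most one of every three consecutive blocks has dominant color $i$; hence, for most $b \in B$, the element $2b$ lies in a block where color $i$ is not dominant and therefore only occupies the sparse reserved positions. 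This forces $2b$ (and hence $b$) to satisfy rigid residue constraints modulo $L$. An analogous family of residue constraints arises from the off-diagonal sums $b_1 + b_2$ with $b_1 \neq b_2$. The two families of constraints, together with the infinitude of $B$ and a pigeonhole/Ramsey-type counting across blocks, are then shown to be jointly incompatible.

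The main obstacle is making the residue/combinatorial argument precise: one must tune the growth rate of $(N_k)$, the period $L$, and the cyclic pattern of the sprinkled non-dominant colors so that the constraints coming from $2B \subset C_i$ and those from $\{b_1+b_2 : b_1 \neq b_2\} \subset C_i$ are genuinely incompatible for every infinite $B$. The idea of playing the ``diagonal'' information $2B$ against the ``off-diagonal'' information $B \oplus B$ mirrors the role that $T^{2n_k}$ plays alongside $T^{n_k}$ in the proofs of Theorems \ref{thrm B+B} and \ref{thrm B+B+t}, which is what ultimately breaks the rigidity needed to fit an infinite $B+B$ inside any single color.
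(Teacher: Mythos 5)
Your proposal is incomplete by your own admission (``the main obstacle is making the residue/combinatorial argument precise''), and the gap is not merely technical: as described, the construction does not work. The block growth $N_{k+1}\geq N_k^{10}$ is far too fast. If $b$ lies in the bottom portion $(N_k, N_{k+1}/2]$ of the block $I_k=(N_k,N_{k+1}]$---which is almost all of $I_k$---then $2b\leq N_{k+1}$, so $2b$ stays inside $I_k$. Consequently one can take $B$ to consist of one element from the lower half of each block $I_k$ with $\sigma(k)=i$; then every $2b$ lands back in a block whose dominant color is $i$, and since $b_1+b_2$ for $b_1\ll b_2$ is essentially $b_2$, the off-diagonal sums also land in dominant-$i$ territory. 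The cyclic dominant-color mechanism and the sparse reserved stripes never come into play, so no residue constraint on $B$ is ever triggered. More fundamentally, the reserved positions are periodic (period $L$), i.e.\ an \emph{additive} pattern, whereas doubling $b\mapsto 2b$ is \emph{multiplicative}; taking all of $B$ in a single residue class mod $L$ makes $2B$ and $B\oplus B$ land in a single residue class mod $L$, so the additive constraints from the diagonal and off-diagonal parts of $B+B$ are in general \emph{compatible}, not contradictory.

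The paper's construction resolves exactly this tension by working at a multiplicative (logarithmic) scale. It sets $A_i=\{m\colon \{\tfrac12\log_2 m\}\in U_i\}$ where $U_0,U_1,U_2$ partition the circle into arcs of length $1/3$, so doubling shifts $\{\tfrac12\log_2 m\}$ by exactly $1/2$; since $1/2>1/3$, the diagonal sum $2b$ and the off-diagonal sum $b+b'$ (with $b'\ll b$) can never both lie in the same $A_i$. Each $A_i$ is a union of ever-longer intervals, so each is thick but the $A_i$ overlap to cover $\N$, and the final interleaving $C_j=(A_0\cap(3\N+a_j))\cup(A_1\cap(3\N+b_j))\cup(A_2\cap(3\N+c_j))$ (as in the remark after \cref{syndetic set with no B+B}) makes each color syndetic while the mod-$3$ pigeonholing pins any putative monochromatic $B+B$ inside a single $A_i$, yielding the contradiction. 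If you want to repair your block scheme, the essential fix is to make the blocks grow at a fixed exponential rate (e.g.\ $I_k=(2^k,2^{k+1}]$) so that $2b$ always moves to the next block, and to abandon the additive period-$L$ sprinkling in favor of an interleaving along residue classes that is compatible with the multiplicative structure, as in the paper.
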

The coloring of \cref{cor 3 coloring synsetic} 
is constructed in \cref{syndetic sets}. 

\medskip

\textbf{Acknowledgments.} The authors thank their advisors, Joel Moreira and Bryna Kra, respectively, for introducing them to these problems and providing valuable insight and guidance throughout the making of this paper. 
The first author also expresses gratitude to Jan Kus, whose comment during an informal seminar presentation of this work at the University of Warwick illuminated how one of the equalities presented in 
\cref{subsec examples} can be achieved.

\medskip

\section{Ergodic theoretic terminology and dynamical statements} \label{dynamical statements}

The proofs of Theorems \ref{thrm B+B} and \ref{thrm B+B+t} are ergodic theoretic in 
nature and depend on manipulations of the new dynamical methods developed in 
\cite{Kra_Moreira_Richter_Robertson_infinitesumsets} and extended in 
\cite{Kra_Moreira_Richter_Robertson_BBt} in order to find infinite patterns in sets 
of positive density. Indeed, by handling this new technology 
and by 
extrapolating crucial combinatorial information given by the 
densities of the sets we 
work with, we were able to minimize the technical 
details of both proofs.

In order to formulate our dynamical results we recall some basic 
terminology. 
Throughout, a topological system is a pair $(X,T)$, where $X$ is a compact metric 
space and $T\colon X \to X$ a homeomorphism. If, in addition, there is a $T$-invariant Borel 
probability measure $\mu$ on $X$, that is, $\mu(T^{-1}A)=\mu(A)$, for all $A\in \cB$, 
we call the triple $(X,\mu,T)$ a \emph{measure preserving system} or system for short.  We stress that, as in \cite{Kra_Moreira_Richter_Robertson_infinitesumsets}, all our measure preserving systems have an underlying topological stucture. 

The system $(X,\mu,T)$ is ergodic if $T^{-1}A=A$, for some $A\in \cB$, 
implies that $\mu(A) \in \{0,1\}$. We denote the \emph{support} of 
the measure $\mu$, meaning the smallest closed subset of $X$ that has full measure, by $\supp(\mu)$.

A \emph{F\o lner sequence} $\Phi$ in $\N$ is a sequence of (non-empty) finite sets $N \mapsto \Phi_N \subset \N$, $N\in \N$, which is assymptotically invariant under any shift. In other words, 
$$\lim_{N\to \infty} \frac{\left| \Phi_N \cap \left(t+\Phi_N\right) \right|}{|\Phi_N|}=1,$$
for any $t\in \N$. Given a system $(X,\mu,T)$, a point $a\in X$ is \emph{generic} for $\mu$ 
along a F\o lner sequence $\Phi$, written as $a\in \textbf{gen}(\mu,\Phi)$, if 
$$\mu = \lim_{N\to \infty} \frac{1}{|\Phi_N|} \sum_{n\in \Phi_N } \delta_{T^n a},$$
where $\delta_x$ is the Dirac mass at $x\in X$ and the limit is in the weak* topology. A well-known consequence of the pointwise ergodic theorem is that for an ergodic system $(X,\mu,T)$ and any F\o lner sequence, there is a subsequence $\Phi$ such that $\mu$-almost every point $x\in X$ is generic along $\Phi$. 

Our first main dynamical result is the following 
theorem. In 
this section we will show that it implies \cref{thrm B+B}.

\begin{theorem} \label{Dynamical B+B}
Let $(X,\mu,T)$ be an ergodic system, let $a\in \textbf{gen}(\mu,\Phi)$ for some F\o lner sequence $\Phi$ and $E_1,E_2 \subset X$ be open sets satisfying
\begin{equation} \label{**}
2\mu(E_2) + \mu(E_1)>2. 
\end{equation}
Then, there exist $x_1,x_2 \in X$ and a strictly increasing sequence $(n_k)_{k\in \N}$ of integers, such that $(T \times T)^{n_k}(a,x_1) \to (x_1,x_2)$ as $k\to \infty$ and $(x_1,x_2) \in E_1 \times E_2$.  
\end{theorem}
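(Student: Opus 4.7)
The plan is to first produce a sequence $(n_k)$ with $T^{n_k} a \to x_1 \in E_1$ and $T^{2 n_k} a \to x_2 \in E_2$; a diagonal argument will then upgrade this to $T^{n_k} x_1 \to x_2$, completing the proof.

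By weak-$*$ compactness I pass to a subsequence of $\Phi$ along which the empirical measures
\[
\tilde\sigma_N \;:=\; \frac{1}{|\Phi_N|}\sum_{n \in \Phi_N} \delta_{(T^n a,\, T^{2n} a)}
\]
converge weak-$*$ to a Borel probability $\tilde\sigma$ on $X \times X$; by the F\o lner property, $\tilde\sigma$ is invariant under $T \times T^2$. Its first marginal is $\mu$ (since $a \in \textbf{gen}(\mu,\Phi)$); let $\mu_2$ denote its second marginal, a $T^2$-invariant probability. The crucial estimate is
\[
\mu_2(E_2) \;\geq\; 2\mu(E_2) - 1.
\]
This reflects the possible non-ergodicity of $T^2$ for $\mu$: if $\mu$ splits into two $T^2$-ergodic pieces $2\mu|_{X_0}$ and $2\mu|_{X_1}$ exchanged by $T$, then $\{T^{2n} a\}$ is trapped in the piece containing $a$ and misses the other. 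I establish the estimate by a parity pigeonhole: the F\o lner property gives $|\Phi_N \cap 2\Z| \sim |\Phi_N|/2$, and genericity provides at least $(\mu(E_2) - o(1))|\Phi_N|$ indices $m \in \Phi_N$ with $T^m a \in E_2$, so at least $(\mu(E_2) - \tfrac{1}{2} - o(1))|\Phi_N|$ of these are even. A reparametrization along the auxiliary F\o lner sequence $\{m : 2m \in \Phi_N\}$ (which is F\o lner because $\Phi$ is asymptotically invariant under shifts by $2$), together with a further subsequence extraction so that the averages of $\delta_{T^{2n}a}$ over $n \in \Phi_N$ and over $m$ with $2m \in \Phi_N$ agree in the limit, converts this count into the desired bound on $\mu_2(E_2)$. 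Combined with the inclusion--exclusion inequality
\[
\tilde\sigma(E_1 \times E_2) \;\geq\; \mu(E_1) + \mu_2(E_2) - 1 \;\geq\; \mu(E_1) + 2\mu(E_2) - 2,
\]
this yields $\tilde\sigma(E_1 \times E_2) > 0$ via \cref{**}. A routine inner-regularity argument --- shrinking $E_i$ to open sets with closures in $E_i$ and measures arbitrarily close --- transfers the positivity from the empirical averages to the limit.

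Since $E_1 \times E_2$ is open, this gives $(x_1, x_2) \in \supp\tilde\sigma \cap (E_1 \times E_2)$ and a strictly increasing sequence $(n_k)$ with $(T^{n_k} a, T^{2 n_k} a) \to (x_1, x_2)$. I pass to a further subsequence so that $T^{n_k} x_1 \to x_2$ via a diagonal argument exploiting continuity of each $T^{n_k}$ at $x_1$: for every $l$ there is $\delta_l > 0$ with $d(y, x_1) < \delta_l \Rightarrow d(T^{n_{k_l}} y, T^{n_{k_l}} x_1) < 1/l$; choosing $k_l$ increasing with $d(T^{n_{k_l}} a, x_1) < \delta_l$ gives
\[
d(T^{n_{k_l}} x_1, x_2) \;\leq\; d\bigl(T^{n_{k_l}} x_1, T^{2 n_{k_l}} a\bigr) + d\bigl(T^{2 n_{k_l}} a, x_2\bigr) \;<\; \tfrac{1}{l} + d\bigl(T^{2 n_{k_l}} a, x_2\bigr) \longrightarrow 0.
\]
Hence $(T \times T)^{n_{k_l}}(a, x_1) \to (x_1, x_2) \in E_1 \times E_2$, as required. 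The principal obstacle is the parity estimate $\mu_2(E_2) \geq 2\mu(E_2) - 1$: the factor of $2$ in \cref{**} is precisely what is needed to absorb the potential non-ergodicity of $T^2$, and some technical care is required to justify the estimate along a general F\o lner sequence rather than the standard one.
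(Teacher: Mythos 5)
Your proposal has a genuine gap in its final step, and it is structural rather than cosmetic. The ``diagonal argument'' that upgrades $(T^{n_k}a, T^{2n_k}a) \to (x_1, x_2)$ to $T^{n_{k_l}} x_1 \to x_2$ along a subsequence is circular: you choose $\delta_l$ so that $d(y, x_1) < \delta_l$ implies $d(T^{n_{k_l}} y, T^{n_{k_l}} x_1) < 1/l$, and then choose $k_l$ so that $d(T^{n_{k_l}} a, x_1) < \delta_l$ --- but $\delta_l$ depends on the map $T^{n_{k_l}}$, hence on $k_l$, which you are simultaneously selecting to satisfy a condition involving $\delta_l$. The circularity cannot be resolved in general: in any system with sensitivity to initial conditions, the continuity modulus $\delta(n,\epsilon)$ of $T^n$ at $x_1$ can shrink to $0$ as $n\to\infty$ much faster than $d(T^n a, x_1)$ does along any chosen subsequence, and there is no compensating control. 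Put differently, convergence of $(T^{n_k}a, T^{2n_k}a)$ simply does not imply convergence of $T^{n_k}x_1$ along a subsequence, even though $T^{2n_k}a = T^{n_k}(T^{n_k}a)$ and $T^{n_k}a \to x_1$. This is precisely the obstruction the paper's machinery is built to overcome: it first passes (Proposition \ref{continuous Kronecker}) to an extension with a continuous factor map onto its Kronecker, constructs the measure $\sigma_a$ from the disintegration of $\mu$ over that factor, and then uses parts (\ref{support and lambda(a,x)}) and (\ref{generic point for lambda(a,x)}) of Proposition \ref{results from KMRR} to show that for $\sigma_a$-a.e.\ $(x_1,x_2)$ the pair $(a,x_1)$ is generic for a measure $\lambda_{(a,x_1)}$ whose support contains $(x_1,x_2)$ (Proposition \ref{EP full measure}). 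It is genericity of $(a,x_1)$ for $\lambda_{(a,x_1)}$ --- i.e., control of the empirical measure of $(T^n a, T^n x_1)$, not of $(T^n a, T^{2n} a)$ --- that delivers the Erd\H{o}s progression; no amount of diagonalization on the latter sequence replaces this.

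A secondary issue: your marginal estimate $\mu_2(E_2) \geq 2\mu(E_2)-1$ is true (in the paper it follows cleanly from $\pi_2\sigma_a + T\pi_2\sigma_a = 2\mu$, part (\ref{pushforward of sigma}) of Proposition \ref{results from KMRR}), but your justification via a parity pigeonhole followed by ``reparametrization along $\{m : 2m \in \Phi_N\}$'' is not rigorous as written, since this is a different F\o lner sequence from $\Phi_N$ and there is no a priori reason a single subsequence extraction forces the two averages of $\delta_{T^{2n}a}$ to agree with the limit you need. That part could likely be repaired, but the diagonalization gap cannot be.
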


Recalling terminology from 
\cite{Kra_Moreira_Richter_Robertson_BBt}, given a topological
system $(X,T)$, a point $(x_0,x_1,x_2) \in X^3$ 
will be called a (three-term) \emph{Erd\H{o}s progression} if 
there exists a strictly increasing sequence $(n_k)_{k\in \N}$ in $\N$, such 
that $(T \times T)^{n_k}(x_0,x_1) \to (x_1,x_2)$. We are going 
to use this concept for the systems arising from \cref{FCP}
below.
An obvious, yet useful, observation is that \cref{Dynamical B+B} 
can be restated using this 
terminology. Indeed, Theorem \ref{Dynamical B+B} states that 
when \eqref{**} holds, there always exists an 
Erd\H{o}s progression of the form $(a,x_1,x_2) \in \{a\}\times E_1 \times E_2$.

The next result, proven in 
\cite{Kra_Moreira_Richter_Robertson_BBt}, facilitates the 
translation of the existence of Erd\H{o}s progressions in 
topological systems to a combinatorial statement about infinite 
sumsets in sets of return times related to those systems.

\begin{lemma}\cite[Theorem $2.2$]{Kra_Moreira_Richter_Robertson_BBt} \label{EP=>SUMSETS} 
Fix a topological system $(X,T)$ and $U,V \subset X$ open. If there exists an Erd\H{o}s progression $(x_0,x_1,x_2)\in X^3$ with $x_1\in U$ and $x_2 \in V$, then there exists some infinite set $B\subset \{n\in \N\colon T^nx_0 \in U\}$ such that $\{b_1+b_2\colon b_1,b_2\in B,\ b_1<b_2\} \subset \{n\in \N\colon T^nx_0 \in V\}$.    
\end{lemma}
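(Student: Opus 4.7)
The plan is to construct $B = \{b_1 < b_2 < \cdots\}$ inductively, drawing each $b_\ell$ from the sequence $(n_k)$ supplied by the Erd\H{o}s progression hypothesis. By definition there is a strictly increasing $(n_k)\subset\N$ with $T^{n_k}x_0\to x_1$ and $T^{n_k}x_1\to x_2$. Since $x_1\in U$ and $x_2\in V$ are open conditions, for every sufficiently large $k$ both $T^{n_k}x_0\in U$ and $T^{n_k}x_1\in V$ hold.

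The main device is a nested family of open neighborhoods of $x_1$ that exploits pointwise continuity of each iterate $T^m$. Suppose $b_1<\cdots<b_{\ell-1}$ have already been chosen, together with open neighborhoods $W_1,\ldots,W_{\ell-1}$ of $x_1$ such that $T^{b_i}(W_i)\subset V$ for every $i<\ell$; such $W_i$ exists because $T^{b_i}$ is continuous at $x_1$ and $T^{b_i}x_1\in V$. To choose $b_\ell$, I consider the open set $U\cap W_1\cap\cdots\cap W_{\ell-1}$, which contains $x_1$. Using $T^{n_k}x_0\to x_1$ together with $T^{n_k}x_1\to x_2\in V$, I pick some $n_k>b_{\ell-1}$ large enough that simultaneously
\[
T^{n_k}x_0\in U\cap W_1\cap\cdots\cap W_{\ell-1}\qquad\text{and}\qquad T^{n_k}x_1\in V,
\]
and set $b_\ell:=n_k$. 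Finally, I use continuity of $T^{b_\ell}$ at $x_1$ to fix an open neighborhood $W_\ell\ni x_1$ with $T^{b_\ell}(W_\ell)\subset V$, preserving the inductive hypothesis.

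To verify the conclusion: by construction $T^{b_\ell}x_0\in U$ for every $\ell$, so $B\subset\{n\in\N\colon T^nx_0\in U\}$. Moreover, for any $i<\ell$, the choice of $b_\ell$ ensures $T^{b_\ell}x_0\in W_i$, whence
\[
T^{b_i+b_\ell}x_0 \;=\; T^{b_i}\bigl(T^{b_\ell}x_0\bigr)\in T^{b_i}(W_i)\subset V,
\]
which gives $\{b_i+b_j:i<j\}\subset\{n\in\N\colon T^nx_0\in V\}$, as required. Infinitude of $B$ is automatic since $b_\ell>b_{\ell-1}$ at every step.

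I expect no serious obstacle; the argument is a standard diagonal/continuity construction. The only point requiring care is the order of quantifiers, namely to commit to the neighborhood $W_i$ \emph{before} choosing $b_\ell$, so that one uses continuity of the already-fixed map $T^{b_i}$ applied to the still-to-be-produced orbit point $T^{b_\ell}x_0$, rather than continuity of $T^{b_\ell}$ (whose modulus of continuity would depend on $\ell$). With the nested neighborhoods handling this bookkeeping, the construction proceeds without further difficulty.
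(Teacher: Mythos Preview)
Your proof is correct and is essentially the standard argument for this lemma. The paper itself does not supply a proof; it quotes the statement from \cite[Theorem~2.2]{Kra_Moreira_Richter_Robertson_BBt} and uses it as a black box, so there is no in-paper proof to compare against. Your inductive construction, choosing $b_\ell$ from the tail of $(n_k)$ so that $T^{b_\ell}x_0$ lands in the shrinking intersection $U\cap W_1\cap\cdots\cap W_{\ell-1}$ and then fixing $W_\ell$ via continuity of $T^{b_\ell}$, is exactly the approach taken in the cited reference.
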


In order to prove that \cref{Dynamical B+B} implies 
\cref{thrm B+B} we will make use of a new variant of Furstenberg's 
correspondence principle, originally introduced in
\cite{Furstenberg77}, tailored to our purposes. For the rest of 
this section $\Sigma$ denotes the space $\{0,1\}^{\Z}$ and is endowed with 
the product topology, so that it becomes a compact metrizable space. 
We also consider the shift transformation $S\colon \Sigma \to \Sigma$ given by $S(x(n))=x(n+1)$, for any $n\in \Z$, $x=(x(n))_{n\in \Z} \in \Sigma$.

 \begin{lemma} \label{FCP}
 Let $A\subset \N$. 
 \begin{enumerate}
     \item There exists an ergodic system $(\Sigma 
 \times \Sigma, \mu, S^2 \times S)$, an open set $E\subset \Sigma$, a point $a\in \Sigma$ and a F\o lner sequence 
 $\Phi$, such that $(a,a)\in \textbf{gen}(\mu,\Phi)$ and 

 \begin{equation*} 2\mu(\Sigma \times E) + \mu(E \times \Sigma) \geq 3 \left(2\cdot \overline{\diff}(A)-1 \right).
 \end{equation*}
     \item There exists an ergodic system $(\Sigma 
 \times \Sigma, \mu, S^2 \times S)$, an open set $E\subset 
 \Sigma$, a point $a\in \Sigma$ and a F\o lner sequence 
 $\Phi$, such that $(a,a)\in \textbf{gen}(\mu,\Phi)$ and 
 \begin{equation*} 2\mu(\Sigma \times E) + \mu(E \times \Sigma) \geq 2\cdot \underline{\diff}(A) + 2\cdot \overline{\diff}(A)-1.
 \end{equation*}
 \end{enumerate}   
In both cases we have that $A=\{n\in \N: S^n a \in E \}$. 
 \end{lemma}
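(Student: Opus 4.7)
The strategy is a variant of Furstenberg's correspondence principle adapted to the product system $(\Sigma\times\Sigma,S^2\times S)$. The point is that the diagonal orbit $\{(S^2\times S)^n(a,a)\}_n$ records, in its two coordinates, membership of both $n$ and $2n$ in $A$, so that density information about $A$ along $[1,M]$ and along $[1,2M]$ translates into masses of the two cylinder rectangles $\Sigma\times E$ and $E\times\Sigma$. A short counting argument then converts these densities into the desired inequality for $2\mu(\Sigma\times E)+\mu(E\times\Sigma)$.

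Let $a\in\Sigma$ be defined by $a(n)=\1_A(n)$ for $n\geq1$ and $a(n)=0$ otherwise, and set $E:=\{x\in\Sigma:x(0)=1\}$. Then $E$ is clopen, and for every $n\in\N$ we have $S^n a\in E$ if and only if $n\in A$; this is the final assertion of the lemma. The $n$-th iterate of $(a,a)$ under $S^2\times S$ is $(S^{2n}a,S^n a)$, so membership in $E\times\Sigma$ (respectively $\Sigma\times E$) records $2n\in A$ (respectively $n\in A$). For the F\o lner sequence, pick $M_N\to\infty$ for which $|A\cap[1,2M_N]|/(2M_N)\to\overline{\diff}(A)=:\alpha$ (possible by definition of $\overline{\diff}$) and set $\Phi_N:=[1,M_N]$. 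After a diagonal extraction we may assume that the empirical measures $\nu_N:=\tfrac{1}{M_N}\sum_{n=1}^{M_N}\delta_{(S^{2n}a,S^n a)}$ converge weakly to an $(S^2\times S)$-invariant probability $\nu$ on $\Sigma\times\Sigma$, and that $\beta:=\lim|A\cap[1,M_N]|/M_N$ exists.

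From the clopenness of $E$ we read off $\nu(\Sigma\times E)=\beta$ and $\nu(E\times\Sigma)=\lim|A\cap 2\N\cap[1,2M_N]|/M_N$. Two elementary inequalities drive both parts: since $[1,2M_N]\setminus 2\N$ has exactly $M_N$ elements, $|A\cap 2\N\cap[1,2M_N]|\geq|A\cap[1,2M_N]|-M_N$, giving $\nu(E\times\Sigma)\geq 2\alpha-1$; and $|A\cap[1,2M_N]|\leq|A\cap[1,M_N]|+M_N$ forces $\beta\geq 2\alpha-1$. For case (1), combining $2\beta\geq 2(2\alpha-1)$ with $\nu(E\times\Sigma)\geq 2\alpha-1$ yields $2\nu(\Sigma\times E)+\nu(E\times\Sigma)\geq 3(2\overline{\diff}(A)-1)$. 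For case (2), the weaker but automatic bound $\beta\geq\underline{\diff}(A)$ combined with $\nu(E\times\Sigma)\geq 2\overline{\diff}(A)-1$ gives $2\nu(\Sigma\times E)+\nu(E\times\Sigma)\geq 2\underline{\diff}(A)+2\overline{\diff}(A)-1$.

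Finally, $\nu$ must be upgraded to an ergodic measure $\mu$ for which $(a,a)$ is still generic, and this is the main technical obstacle. Since $\mu\mapsto 2\mu(\Sigma\times E)+\mu(E\times\Sigma)$ is affine, the ergodic decomposition $\nu=\int\nu_y\,dP(y)$ produces an ergodic component $\mu:=\nu_{y_0}$ still satisfying the inequality above. To produce a F\o lner sub-sequence of $\Phi$ along which $(a,a)$ is generic for this $\mu$, one uses the standard localisation technique underlying the correspondence principles of \cite{Kra_Moreira_Richter_Robertson_infinitesumsets,Kra_Moreira_Richter_Robertson_BBt}: long sub-intervals $[L_N+1,L_N+K_N]\subset[1,M_N]$ are chosen whose restricted empirical measures concentrate near $\nu_{y_0}$, and such sub-intervals remain a F\o lner sequence in $\N$, completing the construction.
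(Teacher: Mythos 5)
Your proposal is correct and follows essentially the same route as the paper: you code $A$ as a point $a$ in the shift, run empirical measures along $(S^2\times S)^n(a,a)$ on intervals $[1,M_N]$ with $2M_N$ realizing the upper density, derive the two counting inequalities $\nu(E\times\Sigma)\geq 2\overline{\diff}(A)-1$ and $\nu(\Sigma\times E)\geq\max(2\overline{\diff}(A)-1,\underline{\diff}(A))$, and then pass to an ergodic component via Furstenberg's localization argument (the paper cites \cite[Proposition 3.9]{Furstenberg81} for the last step). The only slight imprecision is the order of quantifiers at the end: one should first observe that $(a,a)$ is quasi-generic for $P$-a.e.\ ergodic component $\nu_y$ and that the set of $y$ satisfying the density inequality has positive $P$-measure, and then pick $y_0$ in the intersection, rather than fixing $y_0$ by the inequality and then asserting localization works for that particular component.
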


\begin{proof}
Let $A \subset \N$. By definition 
there is a sequence of integers $(N_k)$ with 
$$\overline{\diff}(A)=\lim_{k\to \infty} \frac{\left|A \cap [1,N_k]\right|}{N_k}.$$
 Associate to the set $A$ a point $a\in \Sigma = \{0,1\}^{\Z}$ via

\begin{equation*}
    a(n) = 
    \left\{ 
    \begin{array}{cc}
        1 & \text{if $ n\in A$ } \\
        0 & \text{otherwise}.
    \end{array}
    \right.
\end{equation*}
Define the clopen set $E=\{x\in \Sigma\colon x(0)=1\}$ and observe 
that, by construction, $A=\{n\in \N\colon S^na \in E\}$. Now let $N'_k=\lfloor N_k/2\rfloor$, for all $k\in \N$ and consider the sequence of Borel probabilities $(\mu_k)$ on $\Sigma \times \Sigma$ given by $$\mu_k=\frac{1}{N'_k} \sum_{n=1}^{N'_k} \delta_{(S^2 \times S)^n(a,a)}.$$
We let $\mu'$ be a weak* accumulation point of $(\mu_k)$, and  
then it is well-known that $\mu'$ is an $S^2 \times S$-invariant measure.
It follows by definition that for all $k\in \N$ we have
\begin{equation} \label{ineq: 4}
\mu_k(\Sigma \times E)=\frac{1}{N'_k} \sum_{n=1}^{N'_k} \delta_{S^na}(E) = \frac{\left| A\cap \lfloor N'_k \rfloor \right| }{N'_k}.
\end{equation}
For any $k\in \N$ we also have
\begin{align}
    \mu_k(\Sigma \times E)&=\frac{1}{N'_k} \sum_{n=1}^{N'_k} \delta_{S^na}(E) = \frac{1}{N'_k} \left( \sum_{n=1}^{N_k} \delta_{S^na}(E) - \sum_{n=N'_k+1}^{N_k} \delta_{S^na}(E)  \right) \nonumber \\ 
& \geq \frac{2}{N_k} (  \sum_{n=1}^{N_k} \delta_{S^na}(E)- \frac{1}{2}N_k)= 2 \frac{\left|A \cap  \lfloor N_k \rfloor \right|}{N_k}-1 \label{ineq: 1}
\end{align} 
and similarly 
\begin{align} 
\mu_k(E \times \Sigma )&=\frac{1}{N'_k} \sum_{n=1}^{N'_k} \delta_{S^{2n}a}(E) \geq \frac{1}{N'_k} \left( \sum_{n=2}^{N_k-1} \delta_{S^{n}a}(E) -\sum_{n=1}^{N'_k}\delta_{S^{2n+1}a}(E) \right)  \nonumber \\
&\geq \frac{2}{N_k} \left( \sum_{n=1}^{N_k} \delta_{S^{n}a}(E) - 2- \frac{N_k}{2} \right)=2\frac{\left| A\cap \lfloor N_k \rfloor \right| }{N_k}- \frac{4}{N_k} - 1. \label{ineq: 6}
\end{align}
Taking limits as $k\to \infty$ in \eqref{ineq: 4},  \eqref{ineq: 1} and \eqref{ineq: 6} we have, by the definition of $\mu'$, the fact that $E \subset X$ is clopen and the choice of $(N_k)$ that
\begin{equation} \label{ineq: 4'}
\mu'(\Sigma \times E) \geq \underline{\diff}(A),
\end{equation}
\begin{equation} \label{ineq: 1'}
\mu'(\Sigma \times E) \geq 2 \cdot \overline{\diff}(A)-1,
\end{equation}
\begin{equation} \label{ineq: 6'}
\mu'(E \times \Sigma ) \geq 2\cdot \overline{\diff}(A)-1,
\end{equation}
respectively. Although $\mu'$ is not necessarily ergodic, we 
can use its ergodic 
decomposition to find an $S^2 \times S$-ergodic 
component of it, call it $\mu$. Without loss of generality we may 
assume that 
$\mu$ is supported on the orbit closure of $(a,a)$, 
since this holds for $\mu'$ by construction. Then by a standard argument (see \cite[ Proposition 3.9]{Furstenberg81}) we see there is a F\o lner sequence $\Phi$ in $\N$, such that $(a,a) \in \textbf{gen}(\mu,\Phi)$. 

Combining \eqref{ineq: 4'} and \eqref{ineq: 6'} we have that 
$$2\mu'(\Sigma \times E) + \mu'(E \times \Sigma ) \geq 2\cdot \underline{\diff}(A) + 2\cdot \overline{\diff}(A)-1$$
and so we may choose the ergodic component $\mu$ as above and 
so that 
$$2\mu(\Sigma \times E) + \mu(E \times \Sigma ) \geq 2\cdot \underline{\diff}(A) + 2\cdot \overline{\diff}(A)-1.$$
This gives $(2).$ In a similar fashion, combining
\eqref{ineq: 1'} and \eqref{ineq: 6'} and choosing the ergodic component $\mu$ accordingly, we recover $(1)$. 
\end{proof}

\medskip

We now use the above construction, the information provided 
by the density of our sets and \cref{EP=>SUMSETS} to deduce 
\cref{thrm B+B} from \cref{Dynamical B+B}. 

\begin{proof}[Proof that \cref{Dynamical B+B} implies \cref{thrm B+B}] $(1):$ Let $A \subset \N$ with $\overline{\diff}(A)>5/6$. 
By part $(1)$ of \cref{FCP} there exists an ergodic system $(\Sigma 
\times \Sigma, \mu, S^2 \times S)$, an open set $E\subset 
\Sigma$, a point $a\in \Sigma$ with $A=\{n\in \N: S^n a \in E\}$ and a F\o lner sequence 
$\Phi$, such that $(a,a)\in \textbf{gen}(\mu,\Phi)$ and 
\begin{equation*} 2\mu(\Sigma \times E) + \mu(E \times \Sigma) \geq 3 \left(2\cdot \overline{\diff}(A)-1 \right)>2.
\end{equation*}
We now apply Theorem \ref{Dynamical B+B} for the ergodic system 
$(\Sigma \times \Sigma, \mu, S^2 \times S)$, the generic point $(a,a)$ and the open sets $E_1=E \times \Sigma$, $E_2=\Sigma \times E$ to recover an Erd\H{o}s progression $\left( (a,a),(x_{10},x_{11}) , (x_{20},x_{21}) \right) \in \Sigma^6$, with
$$(x_{10},x_{11}, x_{20},x_{21}) \in E_1 \times E_2=E \times \Sigma \times \Sigma \times E.$$
Setting $U=E_1$, $V=E_2$ and applying Theorem \ref{EP=>SUMSETS}, we obtain an infinite set $B \subset \N$ such that
$$B \subset \{n\in \N\colon (S^2\times S)^n(a,a) \in E \times \Sigma\}=\{n\in \N\colon S^{2n}a \in E\} $$
and 
$$B \oplus B \subset \{n\in \N\colon (S^2\times S)^n(a,a) \in \Sigma \times E\}=\{n\in \N\colon S^{n}a \in E\}=A.$$
From the former it follows that $2B :=\{b+b: b\in B\} \subset A$, because 
$$S^{2n}a \in E \iff a(2n)=1 \iff 2n \in A.$$ 
This fact together with $B \oplus B \subset A$ imply that $B+B \subset A$, by the simple observation that $B+B = 2B \cup \left( B \oplus B \right)$.

\medskip

$(2):$ Let $A\subset \N$ with 
$\underline{\diff}(A)>3/4$. 
By part $(2)$ of \cref{FCP} there is an ergodic system $(\Sigma 
\times \Sigma, \mu, S^2 \times S)$, an open set $E\subset 
\Sigma$, a point $a\in \Sigma$ with $A=\{n\in \N: S^n a \in E\}$ and a F\o lner sequence 
$\Phi$, such that $(a,a)\in \text{gen}(\mu,\Phi)$ and 
\begin{equation*} 2\mu(\Sigma \times E) + \mu(E \times \Sigma) \geq 2\cdot \underline{\diff}(A) + 2\cdot \overline{\diff}(A)-1 \geq 4\cdot \underline{\diff}(A)-1>2.
\end{equation*}
Repeating the argument given at the end of the
previous 
proof, in view 
of Theorem
\ref{Dynamical B+B} and then Theorem \ref{EP=>SUMSETS} we find an infinite set $B\subset \N$ such that $B+B \subset A$.
\end{proof}

\medskip

\begin{remark}\label{d_+d^ remark}
Notice that in our proof that \cref{Dynamical B+B} implies 
$(2)$ of \cref{thrm B+B} we did not really need the condition 
$$4\cdot \underline{\diff}(A)-1>2, $$
but only the weaker condition that 
$$ 2\cdot \underline{\diff}(A) + 2\cdot \overline{\diff}(A)-1 > 2,$$
which can be rewritten as 
$$ \underline{\diff}(A) + \overline{\diff}(A) > 3/2.$$
\end{remark}

In light of the simple observation made in 
\cref{d_+d^ remark}, we see how 
\cref{Dynamical B+B} also implies the following 
result, which directly implies part \eqref{point 2 thrm} of \cref{thrm B+B}, and may be of independent interest. 

\begin{theorem} \label{thrm B+B d_ and d^}
    Let $A \subset \N$. If $\overline{\diff}(A) + \underline{\diff}(A) > 3/2$, then there is an infinite set $B\subset \N$ such that $B+B \subset A$. 
\end{theorem}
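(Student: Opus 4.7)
The plan is to repeat verbatim the argument already used for part \eqref{point 2 thrm} of \cref{thrm B+B}, observing, as noted in \cref{d_+d^ remark}, that the density hypothesis $\underline{\diff}(A)>3/4$ is only invoked to verify the strict inequality $2\mu(\Sigma\times E)+\mu(E\times \Sigma)>2$ needed to apply \cref{Dynamical B+B}, and the weaker hypothesis $\underline{\diff}(A)+\overline{\diff}(A)>3/2$ is in fact already enough to guarantee this.

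Concretely, I would first apply part $(2)$ of \cref{FCP} to the set $A$ to obtain an ergodic system $(\Sigma\times\Sigma, \mu, S^2\times S)$, an open set $E\subset\Sigma$, a point $a\in\Sigma$ with $A=\{n\in\N:S^na\in E\}$, and a F\o lner sequence $\Phi$ such that $(a,a)\in\textbf{gen}(\mu,\Phi)$ and
\begin{equation*}
2\mu(\Sigma\times E)+\mu(E\times \Sigma)\geq 2\underline{\diff}(A)+2\overline{\diff}(A)-1.
\end{equation*}
The standing hypothesis $\underline{\diff}(A)+\overline{\diff}(A)>3/2$ makes the right-hand side strictly greater than $2$, which is exactly the quantitative input required by \cref{Dynamical B+B}.

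Next I would apply \cref{Dynamical B+B} with the open sets $E_1=E\times \Sigma$ and $E_2=\Sigma\times E$ to produce an Erd\H{o}s progression of the form $\bigl((a,a),(x_{10},x_{11}),(x_{20},x_{21})\bigr)\in \Sigma^6$ sitting inside $E_1\times E_2$. Then I would invoke \cref{EP=>SUMSETS} with $U=E_1$, $V=E_2$ for the system $(\Sigma\times\Sigma, S^2\times S)$ to produce an infinite set $B\subset \{n\in\N:S^{2n}a\in E\}$ satisfying $B\oplus B\subset \{n\in\N:S^n a\in E\}=A$. Since $S^{2n}a\in E\iff 2n\in A$, the first containment translates to $2B\subset A$, and combining this with $B\oplus B\subset A$ via the identity $B+B=2B\cup (B\oplus B)$ yields the desired conclusion $B+B\subset A$.

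There is essentially no new obstacle here: all the work has already been done in \cref{FCP}, \cref{Dynamical B+B} and \cref{EP=>SUMSETS}, and the only point of the statement is to record that one can replace $4\underline{\diff}(A)-1>2$ by the genuinely weaker inequality $\underline{\diff}(A)+\overline{\diff}(A)>3/2$ in the chain of estimates. The mildly delicate bookkeeping step, handled already inside the proof of \cref{FCP}, is that one chooses the ergodic component $\mu$ of the weak* limit $\mu'$ so that the combined inequality $2\mu(\Sigma\times E)+\mu(E\times\Sigma)\geq 2\underline{\diff}(A)+2\overline{\diff}(A)-1$ is preserved, rather than each summand individually — but this is precisely what was done in establishing part $(2)$ of \cref{FCP} and requires no modification.
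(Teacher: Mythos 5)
Your proposal is correct and matches the paper's own argument: the paper proves this result precisely by invoking \cref{d_+d^ remark}, which observes that the chain of implications used for part \eqref{point 2 thrm} of \cref{thrm B+B} (namely \cref{FCP} part $(2)$, then \cref{Dynamical B+B}, then \cref{EP=>SUMSETS}, plus the parity trick $B+B=2B\cup(B\oplus B)$) only ever uses the inequality $2\underline{\diff}(A)+2\overline{\diff}(A)-1>2$. Nothing to add or correct.
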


\medskip

We now state our second main dynamical result, 
which implies \cref{thrm B+B+t}.

\begin{theorem} \label{Dynamical B+B+t}  Let $(X,\mu,T)$ be an ergodic system, let $a\in \textbf{gen}(\mu,\Phi)$, for some F\o lner sequence $\Phi$ and $E_1,E_2,F_1,F_2 \subset X$ be open sets with $F_2=T^{-1}E_2$ and  
\begin{equation} \label{*}
2\mu(E_2) + \mu(E_1) + \mu (F_1) >2. 
\end{equation} 
Then, there exist $x_1,x_2 \in X$ such that $(a,x_1,x_2)\in X^3$ is an Erd\H{o}s progression with either $(x_1,x_2) \in E_1 \times E_2$ or $(x_1,x_2) \in F_1 \times F_2$.  
\end{theorem}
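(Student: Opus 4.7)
My plan is to adapt the strategy used to prove \cref{Dynamical B+B}. The goal is to construct a $T\times T$-invariant Borel probability measure $\nu$ on $X\times X$ with the following two properties: (i)~both marginals of $\nu$ equal $\mu$, and (ii)~every $(x_1,x_2)\in\supp(\nu)$ extends $a$ to an Erd\H{o}s progression, meaning that $(T\times T)^{n_k}(a,x_1)\to(x_1,x_2)$ along some strictly increasing sequence $(n_k)$. This measure would be built by a weak-$*$ averaging argument along (a subsequence of) the F\o lner sequence $\Phi$ applied to empirical joinings coming from the orbit of $a$; it is exactly the sort of object produced in the proof of \cref{Dynamical B+B}, and for our purposes I would import it rather than re-derive it.

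Once $\nu$ is in hand, the rest of the argument is a short density computation that crucially uses the shift relation. Since $F_2=T^{-1}E_2$ and $\mu$ is $T$-invariant, $\mu(F_2)=\mu(E_2)$. A case analysis over the $2^4$ possible membership patterns of a point $(x_1,x_2)$ verifies the pointwise indicator inequality
\[
\1_{E_1\times E_2}+\1_{F_1\times F_2}\;\geq\; \1_{E_1\times X}+\1_{F_1\times X}+\1_{X\times E_2}+\1_{X\times F_2}-2
\]
on $X\times X$. Integrating against $\nu$ and using that both marginals of $\nu$ equal $\mu$ yields
\[
\nu(E_1\times E_2)+\nu(F_1\times F_2)\;\geq\; \mu(E_1)+\mu(F_1)+2\mu(E_2)-2\;>\;0
\]
by hypothesis \eqref{*}. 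Hence at least one of the open sets $E_1\times E_2$ or $F_1\times F_2$ has positive $\nu$-measure and therefore meets $\supp(\nu)$. Any point $(x_1,x_2)$ in the intersection, together with the sequence $(n_k)$ furnished by the construction of $\nu$, is the required Erd\H{o}s progression.

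The main obstacle is the construction of $\nu$. Arranging \emph{simultaneously} that both marginals equal $\mu$ (needed to make the integrated inequality quantitatively sharp) and that every point of the support is genuinely of the form $\lim_k(T\times T)^{n_k}(a,x_1)=(x_1,x_2)$ (needed to extract the Erd\H{o}s progression) is the delicate measure-theoretic ingredient; this is precisely the content that drives \cref{Dynamical B+B} and, more broadly, the framework of \cite{Kra_Moreira_Richter_Robertson_infinitesumsets,Kra_Moreira_Richter_Robertson_BBt}. The role of the hypothesis $F_2=T^{-1}E_2$ is exactly to furnish the equality $\mu(F_2)=\mu(E_2)$, so that the shifted ``$t=1$'' target is accounted for on the same footing as the unshifted ``$t=0$'' target and the density budget of \eqref{*} suffices to cross the threshold $2$.
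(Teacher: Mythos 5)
Your density computation is clean and the pointwise indicator inequality checks out, but the proof rests on an assumption that the paper's framework does not deliver: you ask for a $T\times T$-invariant measure $\nu$ on $X\times X$, supported on the set of pairs $(x_1,x_2)$ for which $(a,x_1,x_2)$ is an Erd\H{o}s progression, with \emph{both} marginals equal to $\mu$. The measure $\sigma_a$ that actually plays this role (Proposition~\ref{results from KMRR} and Proposition~\ref{EP full measure}) satisfies only $\pi_1\sigma_a=\mu$ and $\pi_2\sigma_a + T\pi_2\sigma_a = 2\mu$; in general $\pi_2\sigma_a\ne\mu$. The obstruction is already visible in the formula $\sigma_a=\int_Z \eta_z\times\eta_{2z-\pi(a)}\,dm(z)$: multiplication by $2$ on the Kronecker group $Z$ need not be surjective (and in the paper's application, where the base transformation is $S^2\times S$, it genuinely is not), so $\int_Z\eta_{2z-\pi(a)}\,dm(z)$ can be singular to $\mu$ on a large part of $X$. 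Asking for a different Erd\H{o}s-progression measure with second marginal $\mu$ would mean redistributing mass among second coordinates while preserving the progression constraint, which is exactly what you cannot do.

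This is not cosmetic. With $\pi_2\sigma_a=\mu$, the hypothesis $F_2=T^{-1}E_2$ would only be doing the trivial work $\mu(F_2)=\mu(E_2)$, as you say. In the paper's proof of Theorem~\ref{continuous Dynamical B+B+t} the relation $F_2=T^{-1}E_2$ is used in an essential, non-trivial way: it converts the identity $\pi_2\sigma_a+T\pi_2\sigma_a=2\mu$ into $\pi_2\sigma_a(E_2)=2\mu(E_2)-\pi_2\sigma_a(F_2)$, which is then fed into a two-case argument (either $\sigma_a(F_1\times X)+\sigma_a(X\times F_2)>1$, giving $\sigma_a(F_1\times F_2)>0$ directly, or else one bounds $\pi_2\sigma_a(F_2)\le 1-\mu(F_1)$ and concludes $\sigma_a(E_1\times E_2)>0$). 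Your linear-in-indicators argument collapses this case split, but only because it presumes control of $\pi_2\sigma_a$ that the construction does not provide. To repair the proposal you would need to replace ``both marginals equal $\mu$'' by the correct property of $\sigma_a$ and then reintroduce the shift relation $F_2=T^{-1}E_2$ at the level of $\pi_2\sigma_a$, which brings you back to the paper's argument.
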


The next instance of a correspondence principle is the analogue 
of \cref{FCP} which allows for the transition from the dynamics 
in \cref{Dynamical B+B+t} to the combinatorics in Theorem \ref{thrm B+B+t}. 

\begin{lemma}\label{FCP 2}
Let $A\subset \N$. There exists and open set $E \subset \Sigma$ and a point $a \in \Sigma$ such that $A = \{ n \in \N: S^n a \in E \}$. Moreover,

\begin{enumerate}
    \item There exists an ergodic system $(\Sigma 
\times \Sigma, \mu, S^2 \times S)$ and a F\o lner sequence 
$\Phi$, such that $(a,a)\in \text{gen}(\mu,\Phi)$ and 
\begin{equation*} 
2\mu(\Sigma \times E) + \mu(E \times \Sigma) + \mu(S^{-1}E 
\times \Sigma) \geq 2 \left( 3\cdot \overline{\diff}(A)-1 \right).
\end{equation*}
    \item Similarly, there exists an ergodic system $(\Sigma \times \Sigma, \mu, S^2 \times S)$ and a F\o lner sequence 
$\Phi$, such that $(a,a)\in \text{gen}(\mu,\Phi)$ and 
\begin{equation*} 2\mu(\Sigma \times E) + \mu(E \times \Sigma) + \mu(S^{-1}E \times \Sigma) \geq 2\left( \underline{\diff}(A) + \overline{\diff}(A) \right).
\end{equation*}
\end{enumerate}   
\end{lemma}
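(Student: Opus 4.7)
The plan is to adapt the proof of \cref{FCP} by bringing the set $S^{-1}E$ into play alongside $E$. Associate to $A$ the point $a\in\Sigma$ defined by $a(n)=\1_A(n)$ and take $E=\{x\in\Sigma\colon x(0)=1\}$, so that $A=\{n\in\N\colon S^na\in E\}$. Choose a sequence $(N_k)$ realizing $\overline{\diff}(A)$, set $N_k'=\lfloor N_k/2\rfloor$, form the empirical measures $\mu_k=\frac{1}{N_k'}\sum_{n=1}^{N_k'}\delta_{(S^2\times S)^n(a,a)}$, and let $\mu'$ be a weak* accumulation point, which is automatically $S^2\times S$-invariant.

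The key new ingredient is to control the sum $\mu'(E\times\Sigma)+\mu'(S^{-1}E\times\Sigma)$. Since $(S^2\times S)^n(a,a)=(S^{2n}a,S^na)$ and $S^{-1}E=\{x\colon x(-1)=1\}$, the two sets respectively track whether $2n\in A$ and whether $2n+1\in A$, and together they recover the full density of $A$:
\begin{equation*}
\mu_k(E\times\Sigma)+\mu_k(S^{-1}E\times\Sigma)=\frac{1}{N_k'}\sum_{n=1}^{N_k'}\bigl(\1_E(S^{2n}a)+\1_E(S^{2n+1}a)\bigr)=\frac{|A\cap[2,2N_k'+1]|}{N_k'}.
\end{equation*}
Since $2N_k'+1\geq N_k$ one has $|A\cap[2,2N_k'+1]|\geq|A\cap[1,N_k]|-1$, and since $1/N_k'\geq 2/N_k$ the displayed quantity is at least $2|A\cap[1,N_k]|/N_k-O(1/N_k)$. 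Taking $k\to\infty$ along the chosen subsequence yields $\mu'(E\times\Sigma)+\mu'(S^{-1}E\times\Sigma)\geq 2\cdot\overline{\diff}(A)$.

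For part (1) I would combine this with inequality \eqref{ineq: 1'} from the proof of \cref{FCP}, namely $\mu'(\Sigma\times E)\geq 2\overline{\diff}(A)-1$, to get $2(2\overline{\diff}(A)-1)+2\overline{\diff}(A)=2(3\overline{\diff}(A)-1)$. For part (2) I would combine instead with \eqref{ineq: 4'}, namely $\mu'(\Sigma\times E)\geq\underline{\diff}(A)$, to get $2\underline{\diff}(A)+2\overline{\diff}(A)$. Since the functional $\mu\mapsto 2\mu(\Sigma\times E)+\mu(E\times\Sigma)+\mu(S^{-1}E\times\Sigma)$ is linear, the ergodic decomposition of $\mu'$ yields an ergodic component $\mu$ (which we may assume is supported on the orbit closure of $(a,a)$) satisfying the same lower bound, and the standard argument from \cite[Proposition 3.9]{Furstenberg81} produces a F\o lner sequence $\Phi$ with $(a,a)\in\textbf{gen}(\mu,\Phi)$. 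I expect no serious obstacle: the argument is a direct extension of \cref{FCP}, with the only novelty being that pairing $E\times\Sigma$ with $S^{-1}E\times\Sigma$ restores access to the full asymptotic density of $A$ rather than capturing only its even or odd part, which is exactly what is needed to push the estimate from $2\overline{\diff}(A)-1$ up to $2\overline{\diff}(A)$.
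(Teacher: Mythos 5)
Your proposal is correct and follows essentially the same route as the paper: the paper also declares the proof "almost identical" to that of \cref{FCP} and simply records the key new estimate $\mu'(E\times\Sigma)+\mu'(S^{-1}E\times\Sigma)\geq 2\,\overline{\diff}(A)$ (derived exactly as you do, by noting these two quantities together track $A$ on evens and odds), then combines it with \eqref{ineq: 1'} or \eqref{ineq: 4'} before passing to an ergodic component. Your justification of the remaining steps via linearity and Furstenberg's Proposition 3.9 matches the paper's intent.
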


The proof of the above is almost identical to that 
of \cref{FCP}, so we omit it. We merely point out
that in the context and notation of \cref{FCP}, 
we have that 
\begin{align*} 
\mu_k(E \times \Sigma ) + \mu_k(S^{-1}E \times \Sigma)&=\frac{1}{N'_k} \sum_{n=1}^{N'_k} \delta_{S^{2n}a}(E) + \frac{1}{N'_k} \sum_{n=1}^{N'_k} \delta_{S^{2n+1}a}(E)  \nonumber \\ &
\geq \frac{1}{N'_k} \sum_{n=2}^{N_k} \delta_{S^{n}a}(E) \geq 2\frac{\left| A\cap \lfloor N_k \rfloor \right| }{N_k}- \frac{2}{N_k}, \label{ineq:7}
\end{align*}
for all $k\in \N$. Taking limits as $k\to \infty$ this yields 
$$\mu'(E \times \Sigma ) + \mu'(S^{-1}E \times \Sigma) \geq 2\cdot \overline{\diff}(A),$$
which is now used instead of \eqref{ineq: 6'}.

We also remark that in order 
to prove \cref{thrm B+B+t} from \cref{Dynamical B+B+t} one 
would use an argument similar to that of recovering \cref{thrm B+B} from \cref{Dynamical B+B}. This time, we need \cref{FCP 2} 
instead of \cref{FCP} and the sets $E_1 = E \times \Sigma$, 
$E_2 = \Sigma \times E$, $F_1 = S^{-1} E \times \Sigma$ 
and $F_2 = \Sigma \times S^{-1} E$.  We omit 
the repetitive details.  

Moreover, by an observation analogous to 
\cref{d_+d^ remark}, we have that \cref{Dynamical B+B+t} implies the following result, which in turn implies part
$(2)$ of \cref{thrm B+B+t}.

\begin{theorem} \label{B+B+t d_ and d^}
  Let $A \subset \N$. If $\overline{\diff}(A) + \underline{\diff}(A) > 1$, then there is an infinite set $B\subset \N$ and a number $t\in \{0,1\}$ such that $B+B+t \subset A$.    
\end{theorem}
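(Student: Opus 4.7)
The plan is to combine part $(2)$ of \cref{FCP 2} with \cref{Dynamical B+B+t}, following the same template used in the paper to deduce \cref{thrm B+B} from \cref{Dynamical B+B}. The only additional bookkeeping is that the dichotomy in \cref{Dynamical B+B+t} produces two cases, and these will correspond precisely to $t=0$ and $t=1$.

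First, I would apply part $(2)$ of \cref{FCP 2} to the set $A$, using the assumption $\overline{\diff}(A) + \underline{\diff}(A) > 1$. This yields an ergodic system $(\Sigma \times \Sigma, \mu, S^2 \times S)$, an open set $E \subset \Sigma$, a point $a \in \Sigma$ with $A = \{n \in \N : S^n a \in E\}$, and a F\o lner sequence $\Phi$ such that $(a,a) \in \textbf{gen}(\mu,\Phi)$ and
$$2\mu(\Sigma \times E) + \mu(E \times \Sigma) + \mu(S^{-1}E \times \Sigma) \geq 2\bigl(\underline{\diff}(A) + \overline{\diff}(A)\bigr) > 2.$$

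Next, I would set $E_1 = E \times \Sigma$, $E_2 = \Sigma \times E$, $F_1 = S^{-1}E \times \Sigma$, $F_2 = \Sigma \times S^{-1}E$, all open in $\Sigma \times \Sigma$, and write $T = S^2 \times S$. One checks immediately that $T^{-1}E_2 = \Sigma \times S^{-1}E = F_2$, so the structural hypothesis of \cref{Dynamical B+B+t} is met, and the displayed inequality above is exactly condition \eqref{*}. Applying \cref{Dynamical B+B+t} produces an Erd\H{o}s progression $\bigl((a,a), x_1, x_2\bigr)$ with either $(x_1,x_2) \in E_1 \times E_2$ or $(x_1,x_2) \in F_1 \times F_2$.

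Finally, I would invoke \cref{EP=>SUMSETS} in each case. In the first case, taking $U = E_1$ and $V = E_2$ yields an infinite $B \subset \N$ with $B \subset \{n : S^{2n}a \in E\}$ and $B \oplus B \subset \{n : S^n a \in E\} = A$; the former gives $2B \subset A$, and since $B+B = 2B \cup (B \oplus B)$, we conclude $B+B \subset A$, i.e.\ $t = 0$. In the second case, taking $U = F_1$ and $V = F_2$ yields an infinite $B$ with $B \subset \{n : S^{2n+1}a \in E\}$ and $B \oplus B \subset \{n : S^{n+1}a \in E\}$; translating these return-time statements back to $A$ gives $2B + 1 \subset A$ and $(B \oplus B) + 1 \subset A$, hence $B+B+1 \subset A$, i.e.\ $t = 1$. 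I do not foresee a serious obstacle, since all the machinery is in place; the only subtlety is ensuring the correct identification of the shifted case, which is handled transparently by the appearance of $S^{-1}E$ in $F_1, F_2$.
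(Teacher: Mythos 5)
Your proof is correct and is exactly the argument the paper has in mind; the paper explicitly declines to spell out these "repetitive details," naming the same sets $E_1 = E\times\Sigma$, $E_2 = \Sigma\times E$, $F_1 = S^{-1}E\times\Sigma$, $F_2 = \Sigma\times S^{-1}E$ and the analogue of \cref{d_+d^ remark}. Your case analysis at the end, extracting $t=0$ from the $E_1\times E_2$ branch and $t=1$ from the $F_1\times F_2$ branch via \cref{EP=>SUMSETS}, is the intended mechanism and is carried out correctly.
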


In \cref{2 colorings} we give an application of 
\cref{B+B+t d_ and d^} to the question of two-colorings \cite[Problem E2494]{Owing_problems} mentioned in the introduction and show that the possible constructions have restricted density 
values. 

\medskip

\section{Proofs of the dynamical statements} \label{ sec 3}

\subsection{Tools from \cite{Kra_Moreira_Richter_Robertson_BBt}}

As was mentioned in the introduction, the proofs of 
Theorems 
\ref{Dynamical B+B} and \ref{Dynamical B+B+t} follow a scheme 
similar to that of 
\cite{Kra_Moreira_Richter_Robertson_BBt}. We will thus need 
some of the tools introduced there as well as some 
general concepts. 

We recall that a measurable map $\pi\colon X \to Y$ between  two systems $(X,\mu,T)$ and $(Y,\nu,S)$ is called a \emph{factor map} if $\pi_{*} \mu = \nu$ and 
\begin{equation} \label{conjugation}
\pi \circ T = S \circ \pi  \quad \mu \text{-almost everywhere}  
\end{equation}
 In turn, $(Y,\nu,S)$ is called a \emph{factor} of $(X, \mu,T)$ and $(X, \mu,T)$ is called an \emph{extension} of $(Y,\nu,S)$. If, in addition, $\pi$ is 
continuous, surjective and \eqref{conjugation} holds everywhere we 
call $\pi$ a \emph{continuous factor map}. We remark that a factor of an ergodic system is also ergodic. Of special interest are factors with the structure of 
group rotations. More precisely, a \emph{group 
rotation} is a system $(Z,m,R)$ where $Z$ is a compact abelian group with its normalized Haar measure $m$ and $R\colon Z \to Z$ is the rotation map given by $R(z)=z+\alpha$, for some $\alpha \in Z$.
Every ergodic system has a maximal group rotation factor, 
called the \emph{Kronecker} factor and, even though the factor 
map from an ergodic system $(X,\mu,T)$ to its Kronecker 
$(Z,m,R)$ is a priori only measurable, for our purposes we may 
assume that it is also a continuous surjection. Otherwise, the following result shows that we can pass through an extension.

\begin{proposition} \label{continuous Kronecker} \cite[Proposition 3.20]{Kra_Moreira_Richter_Robertson_infinitesumsets}
Let $(X,\mu,T)$ be an ergodic system and let $a \in \textbf{gen}(\mu,\Phi)$ for some F\o lner sequence $\Phi$. Then there is an 
ergodic system $(\Tilde{X},\Tilde{\mu},\Tilde{T})$, a F\o lner 
sequence $\Psi$, a point $\Tilde{a} \in \Tilde{X}$ and a continous 
factor $\Tilde{\pi} \colon \Tilde{X} \to X$, such that $\Tilde{\pi}(\Tilde{a})=a$, $\Tilde{a}\in \textbf{gen}(\Tilde{\mu},\Psi)$ and 
$(\Tilde{X},\Tilde{\mu},\Tilde{T})$ has a continuous factor map to its Kronecker.
\end{proposition}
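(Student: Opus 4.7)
The strategy is to adjoin the Kronecker factor as an additional continuous coordinate to $X$, so that the factor map onto it becomes continuous by construction. Let $(Z,m,R)$ denote the (measure-theoretic) Kronecker factor of $(X,\mu,T)$, with its a priori only measurable factor map $\phi : X \to Z$, and consider the product topological system $(X \times Z, T \times R)$. Fix an arbitrary $z_0 \in Z$, set $\Tilde{a} := (a, z_0)$, and form the empirical averages
$$\frac{1}{|\Phi_N|} \sum_{n \in \Phi_N} \delta_{(T\times R)^n \Tilde{a}}.$$
Along a subsequence of $\Phi$ these converge weak-$*$ to a $(T \times R)$-invariant Borel probability measure $\Tilde{\nu}$ on $X \times Z$. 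Since $a \in \textbf{gen}(\mu,\Phi)$, the first-coordinate pushforward of $\Tilde{\nu}$ is $\mu$; since $(Z,R)$ is a uniquely ergodic group rotation, the second-coordinate pushforward is $m$.

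Next I would pass to an ergodic component $\Tilde{\mu}$ of $\Tilde{\nu}$ and, via the standard extraction argument (cf.\ \cite[Proposition 3.9]{Furstenberg81}), further refine to a F\o lner subsequence $\Psi$ along which $\Tilde{a}$ is generic for $\Tilde{\mu}$. The ergodic system $(X \times Z, \Tilde{\mu}, T \times R)$ is then an ergodic joining of $(X,\mu,T)$ with its Kronecker rotation $(Z,m,R)$. The crucial structural input is the well-known fact that every ergodic joining of an ergodic system with its Kronecker factor is a rotated graph joining: explicitly, $\Tilde{\mu} = (x \mapsto (x, \phi(x)+\alpha))_* \mu$ for some $\alpha \in Z$. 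In particular, $\Tilde{\mu}$-almost surely the second coordinate is a measurable function of the first, agreeing with a translate of $\phi$.

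This identification is the point of the construction: the second-coordinate projection $p_Z : X \times Z \to Z$ is always continuous, and by the joining description it coincides $\Tilde{\mu}$-a.e.\ with a translate of the Kronecker factor map of $(X, \mu, T)$, so it is itself a continuous factor map of the new system onto its Kronecker. Finally, letting $\Tilde{X}$ be the orbit closure of $\Tilde{a}$ in $X \times Z$ (and replacing $X$ by $\overline{\mathrm{orbit}(a)}$ if necessary so that the first-coordinate map is surjective), setting $\Tilde{T} = (T \times R)|_{\Tilde{X}}$ and $\Tilde{\pi}$ equal to the first-coordinate projection, one verifies the required properties: $\Tilde{\pi}$ is continuous and surjective with $\Tilde{\pi}(\Tilde{a}) = a$, $\Tilde{\pi}_* \Tilde{\mu} = \mu$, $\Tilde{a} \in \textbf{gen}(\Tilde{\mu}, \Psi)$, and the Kronecker of $(\Tilde{X}, \Tilde{\mu}, \Tilde{T})$ is realized continuously by $p_Z$. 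The main obstacle is the joining identification: this is what pins down $\Tilde{\mu}$ as essentially a graph over $X$ and uses the special status of the Kronecker as the maximal group-rotation factor. Without it the argument would at best produce a continuous factor onto some group rotation only guaranteed to contain the Kronecker, rather than to equal it.
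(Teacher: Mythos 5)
Your proof is correct and implements what I believe is the same device as the source cited by the paper (the paper itself does not reproduce a proof of Proposition~3.20, only cites \cite[Proposition 3.20]{Kra_Moreira_Richter_Robertson_infinitesumsets}): adjoin the Kronecker as a continuous coordinate, extract a weak-$*$ limit whose marginals are $\mu$ (by genericity of $a$) and $m$ (by unique ergodicity of the rotation), pass to an ergodic component, identify it as a rotated graph joining via the eigenfunction rigidity over the Kronecker, and read off the continuous Kronecker map as the second-coordinate projection. The intermediate step you leave implicit but which is needed is that ergodic components of a joining of ergodic systems are themselves joinings (the marginal of a generic ergodic component is an invariant measure in a decomposition of $\mu$, hence equals $\mu$ by extremality); this is what licenses the appeal to the graph-joining classification. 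Two small remarks. First, you need not restrict $\Tilde{X}$ to the orbit closure of $\Tilde{a}$: taking $\Tilde{X}=X\times Z$, $\Tilde{T}=T\times R$, $\Tilde{\pi}=\pi_1$ already gives a continuous surjective intertwiner $\Tilde{X}\to X$, so the surjectivity patch you append at the end (replacing $X$ by $\overline{\mathrm{orbit}(a)}$) is unnecessary and, strictly speaking, changes the ambient space the proposition is asserted for. Second, the containment in your final sentence is reversed: since the Kronecker is the \emph{maximal} group-rotation factor, absent the graph identification the fallback would be a continuous factor onto a group rotation which is only guaranteed to be \emph{contained in} (i.e.\ a proper factor of) the Kronecker of the extended system, not to contain it.
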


For the rest of this subsection, $(X, \mu,T)$ 
denotes an ergodic system with continuous factor 
map $\pi:X \to Z$ to its Kronecker $(Z,m, R)$. Let 
$z\mapsto \eta_z$ denote the disintegration of 
$\mu$ over the factor map $\pi$ (for details see
\cite[Section 
5.3]{Einsiedler_Ward11}). Then, following  
\cite{Kra_Moreira_Richter_Robertson_infinitesumsets}, for every $ (x_1,x_2) \in X \times X$, we define
the measures
\begin{equation} \label{lambda}
\lambda_{(x_1,x_2)}=\int_Z \eta_{z+\pi(x_1)} \times \eta_{z+\pi(x_2)}\ dm(z).
\end{equation} 
on $X \times X$. As it was shown in \cite[Proposition 3.11]
{Kra_Moreira_Richter_Robertson_infinitesumsets}, $(x_1,x_2) \mapsto \lambda_{(x_1,x_2)}$ is a \emph{continuous ergodic decomposition} of $\mu \times \mu$, that is, a disintegration of $\mu \times \mu $ where the measures $\lambda_{(x_1,x_2)}$ are ergodic for $\left( \mu \times \mu \right)$-almost every $(x_1,x_2) \in X \times X$ and the map $(x_1,x_2) \mapsto \lambda_{(x_1,x_2)}$ is continuous. 
To find Erd\H{o}s progressions with specified 
first coordinate, a predetermined point $a\in X$, the authors in 
\cite{Kra_Moreira_Richter_Robertson_BBt} also introduced a measure $\sigma_a$ on $X \times X$ given by
\begin{equation}\label{sigma}
\sigma_a= \int_Z \eta_z \times \eta_{2z-\pi(a)}\ dm(z) = \int_Z \eta_{\pi(a)+z} \times \eta_{\pi(a)+2z}\ dm(z).
\end{equation}
Let us denote by $\pi_1\colon  X \times X$ the projection 
$(x_1,x_2) \mapsto x_1$ onto the first coordinate, by 
$\pi_2\colon  X\times X \to X$ the 
projection onto the second coordinate and by $\pi_i \sigma_a$ the 
pushforward of $\sigma_a$ under $\pi_i$, $i=1,2$. In addition to the standing assumptions of this subsection, we let $a\in \textbf{gen}(\mu,\Phi)$ for some F\o lner sequence $\Phi$. We collect 
various results from 
\cite[Section 3]{Kra_Moreira_Richter_Robertson_BBt} in the 
following proposition.

\begin{proposition} \label{results from KMRR}
    Let $\lambda_{(x_1,x_2)}$, for $(x_1,x_2) \in X\times X$, and $\sigma_a$ be the measures on $X \times X$ defined in \eqref{lambda} and \eqref{sigma}, respectively. Then
    \begin{enumerate}
        \item \label{pushforward of sigma} $\pi_1\sigma_a=\mu$ and $\pi_2\sigma_a+T\pi_2\sigma_a=2\mu$. 
        \item \label{support and lambda(a,x)}
For $\sigma_a$-almost every $(x_1,x_2)\in X \times X$ we have that $(x_1,x_2) \in \supp(\lambda_{(a,x_1)})$.

\item \label{generic point for lambda(a,x)} 
There exists 
a F\o lner sequence $\Psi$, such that for $\mu$-almost every $x_1\in X$ 
the point $(a,x_1)$ belongs to $\textbf{gen}(\lambda_{(a,x_1)},\Psi)$. 
    \end{enumerate}
\end{proposition}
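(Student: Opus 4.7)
The plan is to verify each statement by unpacking the definitions of $\sigma_a$ and $\lambda_{(x_1,x_2)}$ and exploiting the equivariance $T_*\eta_z = \eta_{z+\alpha}$ coming from $\pi \circ T = R \circ \pi$. All three parts are essentially repackagings of results from \cite[Section 3]{Kra_Moreira_Richter_Robertson_BBt}; my task is to trace the arguments carefully in the notation fixed here.

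For part $(1)$, the identity $\pi_1\sigma_a = \mu$ is immediate from the translation invariance of Haar $m$: the first marginal of $\sigma_a$ is $\int_Z \eta_{\pi(a)+z}\,dm(z) = \int_Z \eta_z\,dm(z) = \mu$. For the second identity, I would expand
\[
\pi_2\sigma_a + T_*\pi_2\sigma_a \;=\; \int_Z \bigl(\eta_{\pi(a)+2z} + \eta_{\pi(a)+2z+\alpha}\bigr)\,dm(z).
\]
The only subtle point is that the doubling endomorphism $z \mapsto 2z$ on $Z$ need not be surjective; however, ergodicity of $R$ forces $\alpha$ to generate a dense subgroup, so either $2Z = Z$ and each summand individually pushes forward to $\mu$, or $[Z:2Z]=2$ and $Z = 2Z \sqcup (2Z+\alpha)$, in which case the two integrals combine coset-by-coset to give $2\mu$. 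This case analysis is the main, mild, obstacle.

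For part $(2)$, I would note that by \eqref{lambda} the measure $\lambda_{(a,x_1)}$ is supported on the Kronecker fiber $\{(y_1,y_2) : \pi(y_2)-\pi(y_1) = \pi(x_1)-\pi(a)\}$ and, parametrized by $z$, factors as a product of the fiber measures $\eta$; meanwhile $\sigma_a$-almost every pair $(x_1,x_2)$ satisfies exactly this linear relation, since $\pi(x_1) = \pi(a)+z$ and $\pi(x_2) = \pi(a)+2z$ for the corresponding $z$. Combined with the continuity of $(x_1,x_2) \mapsto \lambda_{(a,x_1)}$ from \cite[Proposition 3.11]{Kra_Moreira_Richter_Robertson_infinitesumsets}, this places such pairs $(x_1,x_2)$ inside $\supp(\lambda_{(a,x_1)})$.

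For part $(3)$, I would use that $(x_1,x_2)\mapsto \lambda_{(a,x_1)}$ is a continuous ergodic decomposition of $\mu \times \mu$ for $T \times T$, so that the pointwise ergodic theorem gives, for each $x_1$ individually, some F\o lner subsequence along which $(a,x_1)$ is generic for $\lambda_{(a,x_1)}$. The actual obstacle is producing a \emph{single} F\o lner sequence $\Psi$ that works for $\mu$-almost every $x_1$ simultaneously; this is accomplished by a diagonal extraction against a countable dense family of continuous test functions, exploiting the continuity of $x_1 \mapsto \lambda_{(a,x_1)}$ together with the hypothesis $a \in \textbf{gen}(\mu,\Phi)$, exactly as in \cite[Section 3]{Kra_Moreira_Richter_Robertson_BBt}.
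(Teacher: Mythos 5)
The paper itself offers no proof of this proposition; it is stated as a collection of results imported from \cite[Section~3]{Kra_Moreira_Richter_Robertson_BBt}, so there is no in-paper argument for you to be compared against. Your reconstruction follows the same KMRR line, and your treatment of part~(1) is essentially right: the quotient $Z/2Z$ is a monothetic compact group of exponent two, hence cyclic of order at most two, and in the index-two case $\alpha\notin 2Z$ (else its image could not generate $Z/2Z$), so the pushforwards of $m$ under $z\mapsto 2z$ and under $z\mapsto 2z+\alpha$ are the normalized Haar measures of the two cosets of $2Z$, which sum to $2m$; integrating $w\mapsto\eta_{\pi(a)+w}$ against that total gives $2\mu$, while the $\pi_1$-marginal identity is immediate from translation invariance.

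Two points deserve tightening. In part~(2), the continuity that actually drives the conclusion is that of the disintegration $z\mapsto\eta_z$: one needs $w\mapsto\bigl(\eta_{w+\pi(a)}\times\eta_{w+\pi(x_1)}\bigr)(U)$ to be lower semicontinuous for open $U$, so that $\lambda_{(a,x_1)}(U)=\int_Z\bigl(\eta_{w+\pi(a)}\times\eta_{w+\pi(x_1)}\bigr)(U)\,dm(w)>0$ whenever the fiber product at $w=z$ charges $U$, using that $m$ has full support. The continuity of $(x_1,x_2)\mapsto\lambda_{(x_1,x_2)}$ you invoke is a consequence, not the mechanism, and quoting it alone leaves the key step implicit. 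In part~(3), the opening claim is not correct as stated: the pointwise ergodic theorem does \emph{not} give, for each fixed $x_1$, a F\o lner subsequence along which $(a,x_1)$ is generic for $\lambda_{(a,x_1)}$, because $\{a\}\times X$ is a $(\mu\times\mu)$-null set and the ergodic theorem says nothing about points on a null fiber. The genericity of $(a,x_1)$ must be manufactured from the hypothesis $a\in\textbf{gen}(\mu,\Phi)$ together with continuity of the decomposition; you flag this, and your deferral to the KMRR construction is acceptable given the paper does the same, but the first sentence of your part~(3) misdescribes where the genericity comes from.
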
 

Using these facts we can guarantee many Erd\H{o}s 
progressions. 

\begin{proposition}\label{EP full measure}
Let $(X,\mu,T)$ be an ergodic system and assume there is a 
continuous factor map $\pi\colon  X \to Z$ to its Kronecker factor. Let 
$a\in \textbf{gen}(\mu,\Phi)$, for some F\o lner sequence $\Phi$. 
Then for $\sigma_a$-almost every $(x_1,x_2) \in X \times X$, the point 
$(a,x_1,x_2)$ is an Erd\H{o}s progression. 
\end{proposition}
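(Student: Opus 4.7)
The plan is to use the three items of \cref{results from KMRR} essentially off the shelf. First I would extract from item \eqref{generic point for lambda(a,x)} a F\o lner sequence $\Psi$ and a $\mu$-conull set $G_1 \subset X$ of points $x_1$ for which $(a,x_1) \in \textbf{gen}(\lambda_{(a,x_1)},\Psi)$. Since $\pi_1\sigma_a = \mu$ by item \eqref{pushforward of sigma}, the cylinder $\pi_1^{-1}(G_1)$ is $\sigma_a$-conull. Intersecting with the $\sigma_a$-conull set provided by item \eqref{support and lambda(a,x)} yields a single $\sigma_a$-conull set $G \subset X \times X$ of pairs $(x_1,x_2)$ for which both $(x_1,x_2) \in \supp \lambda_{(a,x_1)}$ and $(a,x_1) \in \textbf{gen}(\lambda_{(a,x_1)},\Psi)$ hold simultaneously.

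Next, for each $(x_1,x_2) \in G$ I would show that $(a,x_1,x_2)$ is an Erd\H{o}s progression. Choose a decreasing countable neighborhood basis $(U_k)_{k\in \N}$ of $(x_1,x_2)$ in $X \times X$ with $\mathrm{diam}(U_k) \to 0$. The support hypothesis ensures $\lambda_{(a,x_1)}(U_k) > 0$ for every $k$, while genericity together with the portmanteau theorem forces
\[
\liminf_{N \to \infty} \frac{|\{n \in \Psi_N : (T\times T)^n(a,x_1) \in U_k\}|}{|\Psi_N|} \geq \lambda_{(a,x_1)}(U_k) > 0.
\]
In particular the set of visit times to $U_k$ is infinite for each $k$, so a diagonal extraction produces a strictly increasing sequence $(n_k)$ with $(T\times T)^{n_k}(a,x_1) \in U_k$, and hence $(T\times T)^{n_k}(a,x_1) \to (x_1,x_2)$, as desired.

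I do not expect a serious obstacle: the three items of \cref{results from KMRR} are tailor-made for exactly this statement, and the remaining work amounts to translating ``a generic point visits every positive-measure open set along a positive-density subsequence'' into the existence of a subsequential limit. The only mildly delicate bookkeeping is the coherent handling of the various null sets, which is resolved cleanly by the pushforward identity $\pi_1\sigma_a = \mu$ so that $\mu$-a.e. conditions on $x_1$ automatically become $\sigma_a$-a.e. conditions on $(x_1,x_2)$.
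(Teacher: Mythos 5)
Your argument is correct and follows essentially the same route as the paper: you combine parts \eqref{generic point for lambda(a,x)}, \eqref{pushforward of sigma}, and \eqref{support and lambda(a,x)} of \cref{results from KMRR} in exactly the same way to obtain the two $\sigma_a$-a.e.\ conditions. The only difference is that where the paper closes by citing \cite[Lemma 2.4]{Kra_Moreira_Richter_Robertson_infinitesumsets}, you reprove that lemma inline via a shrinking neighborhood basis, portmanteau, and a diagonal extraction, which is precisely the content of the cited result.
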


\begin{proof}
Let $\Psi$ be the F\o lner sequence and $G\subset X$ be the full 
$\mu$-measure set of $x\in X$ such that $(a,x_1)$ belongs to $\textbf{gen}(\lambda_{(a,x_1)},\Psi)$, arising from part (\ref{generic point for lambda(a,x)}) of \cref{results from KMRR}. By \cref{results from KMRR}, part (\ref{pushforward of sigma}), we have that $\sigma_a(G\times X)=\mu(G)=1$ and so, for $\sigma_a$-almost every $(x_1,x_2) \in X\times X$ we have that $(a,x_1) \in \textbf{gen}(\lambda_{(a,x_1)},\Psi)$. In view of \cref{results from KMRR}, part (\ref{support and lambda(a,x)}), it follows that for $\sigma_a$-almost every $(x_1,x_2) \in X\times X$, 
\begin{enumerate}
    \item $(a,x_1) \in \textbf{gen}(\lambda_{(a,x_1)},\Psi)$ and 
    \item $(x_1,x_2) \in \supp(\lambda_{(a,x_1)})$.  
\end{enumerate}
Thus, applying \cite[Lemma 2.4]{Kra_Moreira_Richter_Robertson_infinitesumsets}, 
we have that for $\sigma_a$-almost every $(x_1,x_2)\in X\times X$ the point 
$(a,x_1,x_2)$ is indeed an Erd\H{o}s progression.  
\end{proof}

\medskip

\subsection{Proofs of the main results}

We are now in the position to prove the following 
special case of \cref{Dynamical B+B}.

\begin{theorem} \label{continuous Dynamical B+B}
Let $(X,\mu,T)$ be an ergodic system and assume there is a continuous factor map $\pi$ to its Kronecker. Let $a\in \textbf{gen}(\mu,\Phi)$, for some F\o lner sequence $\Phi$ and $E_1,E_2 \subset X$ be open sets with
\begin{equation} \label{ineq:10}
2\mu(E_2) + \mu(E_1)>2. 
\end{equation}
Then, there is an Erd\H{o}s progression $(a,x_1,x_2)$ such that
$(x_1,x_2) \in E_1 \times E_2$.  
\end{theorem}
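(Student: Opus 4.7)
The plan is to reduce the theorem to showing $\sigma_a(E_1 \times E_2) > 0$, and then invoke \cref{EP full measure} to extract an Erd\H{o}s progression inside $E_1 \times E_2$.

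First, I would pass to inclusion–exclusion: since $\sigma_a$ is a probability measure on $X \times X$,
\[
\sigma_a(E_1 \times E_2) \;\geq\; \sigma_a(E_1 \times X) + \sigma_a(X \times E_2) - 1.
\]
The two terms on the right-hand side can be computed (or bounded below) using part (\ref{pushforward of sigma}) of \cref{results from KMRR}. The identity $\pi_1 \sigma_a = \mu$ immediately gives
\[
\sigma_a(E_1 \times X) = \mu(E_1).
\]
For the second term, the relation $\pi_2 \sigma_a + T \pi_2 \sigma_a = 2\mu$ evaluated on $E_2$ yields $\pi_2\sigma_a(E_2) + \pi_2\sigma_a(T^{-1}E_2) = 2\mu(E_2)$, and since $\pi_2\sigma_a(T^{-1}E_2) \leq 1$, we obtain
\[
\sigma_a(X \times E_2) = \pi_2 \sigma_a(E_2) \geq 2\mu(E_2) - 1.
\]
Combining these bounds with the hypothesis \eqref{ineq:10},
\[
\sigma_a(E_1 \times E_2) \;\geq\; \mu(E_1) + 2\mu(E_2) - 2 \;>\; 0.
\]

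Next, I would apply \cref{EP full measure}: for $\sigma_a$-almost every $(x_1,x_2) \in X \times X$, the triple $(a,x_1,x_2)$ is an Erd\H{o}s progression. Since the set $N$ of $(x_1,x_2)$ for which $(a,x_1,x_2)$ fails to be an Erd\H{o}s progression satisfies $\sigma_a(N) = 0$, while $\sigma_a(E_1 \times E_2) > 0$, the intersection $(E_1 \times E_2) \setminus N$ is nonempty. Any point in this intersection gives the desired Erd\H{o}s progression $(a,x_1,x_2)$ with $(x_1,x_2) \in E_1 \times E_2$.

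There is no real obstacle here since all the heavy lifting is done by \cref{results from KMRR} and \cref{EP full measure}; the content of the statement lies entirely in how the asymmetric weight $2\mu(E_2) + \mu(E_1)$ in \eqref{ineq:10} matches the asymmetric identity $\pi_2\sigma_a + T\pi_2\sigma_a = 2\mu$ coming from the $S^2 \times S$-structure built into $\sigma_a$. The factor of $2$ in front of $\mu(E_2)$ is precisely what compensates for the loss of $1$ when dropping $\pi_2\sigma_a(T^{-1}E_2)$, which is the only place the second-coordinate asymmetry enters.
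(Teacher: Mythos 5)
Your proof is correct and follows essentially the same route as the paper: reduce to $\sigma_a(E_1\times E_2)>0$, use $\pi_1\sigma_a=\mu$ and the identity $\pi_2\sigma_a+T\pi_2\sigma_a=2\mu$ together with inclusion–exclusion, then invoke \cref{EP full measure}. The only difference is cosmetic — the paper phrases the reduction as ``it suffices that $\sigma_a(E_1\times X)+\sigma_a(X\times E_2)>1$'' before bounding, while you bound directly — so there is nothing further to add.
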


\begin{proof}
We need to find an Erd\H{o}s progression of the 
form 
$(a,x_1,x_2)\in X^3$  
with $(x_1,x_2) \in E_1 \times E_2$. By Proposition 
\ref{EP full measure} it suffices to verify that 
$\sigma_a(E_1 \times E_2)>0$. 
For this, it is enough to show that
\begin{equation} \label{ineq:11} 
\sigma_a(E_1\times X)+ \sigma_a(X\times E_2)>1,    
\end{equation}
because $$E_1 \times E_2=\left(E_1\times X\right) \cap \left(X\times E_2\right)$$
and $\sigma_a$ is a probability measure. 
To see why \eqref{ineq:11} holds, observe that 
\begin{align*}
    \sigma_a(E_1 \times X)+\sigma_a(X \times E_2)&=\pi_1\sigma_a(E_1)+\pi_2\sigma_a(E_2) & \\
    &= \mu(E_1)+2\mu(E_2)-\pi_2\sigma_a(T^{-1}E_2) & \quad \text{by Prop. \ref{results from KMRR} (\ref{pushforward of sigma})} \\
    &\geq \mu(E_1)+2\mu(E_2)-1>1 & \quad  \text{by \eqref{ineq:10}},
\end{align*}
which concludes the proof.
\end{proof}

\medskip
In light of 
\cref{continuous Kronecker}, it turns out that
\cref{Dynamical B+B} reduces to 
\cref{continuous Dynamical B+B}. 

\begin{proof}[Proof of Theorem \ref{Dynamical B+B}]
Let $(X,\mu,T)$, $a\in X$ and $(\Tilde{X},\Tilde{\mu},\Tilde{T})$, $\Tilde{\pi}$ and  $\Tilde{a} \in \Tilde{X}$ be as in Proposition \ref{continuous Kronecker}. As in \cref{Dynamical B+B}, let $E_1,E_2 \subset X$ be open sets satisfying
\begin{equation*}
2\mu(E_2) + \mu(E_1) >2. 
\end{equation*}
Then, we define $\Tilde{E_1}=\Tilde{\pi}^{-1}(E_1)$ and 
$\Tilde{E_2}=\Tilde{\pi}^{-1}(E_2)$. 
By the definition of a factor map we know that $\Tilde{\pi}_{*} \Tilde{\mu}= \mu$ and so it follows that 
\begin{equation*}
2\Tilde{\mu}(\Tilde{E_2}) + \Tilde{\mu}(\Tilde{E_1}) >2. 
\end{equation*}
By Theorem \ref{continuous Dynamical B+B} we can find an Erd\H{o}s 
progression  $(\Tilde{a},\Tilde{x}_1,\Tilde{x}_2) \in \Tilde{X}^3$, such that $(\Tilde{x}_1,\Tilde{x}_2) \in \Tilde{E_1} \times \Tilde{E_2}$. The continuity of $\Tilde{\pi}$ allows us to conclude that the triple
$(a, x_1,x_2):=(\Tilde{\pi}(\Tilde{a}),\Tilde{\pi}
(\Tilde{x}_1),\Tilde{\pi}(\Tilde{x}_2)) \in X^3$ is also an Erd\H{o}s 
progression in $(X,T)$ and clearly, by definition, $(x_1,x_2) \in E_1 \times E_2$. 
\end{proof}

Likewise, we can deduce \cref{Dynamical B+B+t} from the following theorem.

\begin{theorem} \label{continuous Dynamical B+B+t}
Let $(X,\mu,T)$ be an ergodic system and assume there is a continuous factor map $\pi$ to its Kronecker. Let $a\in \textbf{gen}(\mu,\Phi)$, for some F\o lner sequence $\Phi$ and $E_1,E_2,E_3 \subset X$ be open sets with $F_2=T^{-1}E_2$ and
\begin{equation} \label{ineq:9} 
2\mu(E_2) + \mu(E_1) + \mu (F_1) >2. 
\end{equation}
Then, there is an Erd\H{o}s progression $(a,x_1,x_2)$ and either $(x_1,x_2) \in E_1 \times E_2$ or $(x_1,x_2) \in F_1 \times F_2$.  
\end{theorem}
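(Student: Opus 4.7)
The plan is to follow the proof of \cref{continuous Dynamical B+B} closely, now handling the two candidate rectangles $E_1\times E_2$ and $F_1\times F_2$ simultaneously. By \cref{EP full measure}, it suffices to show that $\sigma_a\bigl((E_1\times E_2)\cup(F_1\times F_2)\bigr)>0$, as then the full-$\sigma_a$-measure set of pairs $(x_1,x_2)$ for which $(a,x_1,x_2)$ is an Erd\H{o}s progression must meet this union. Since $\sigma_a$ is non-negative, this in turn follows from
$$\sigma_a(E_1\times E_2)+\sigma_a(F_1\times F_2)>0,$$
because then at least one of the two summands must itself be positive.

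The key computation mirrors that of \cref{continuous Dynamical B+B}. For any measurable $A,B\subset X$ the elementary bound $\sigma_a(A\times B)\ge \sigma_a(A\times X)+\sigma_a(X\times B)-1$ holds, since $\sigma_a$ is a probability measure. Applying this to each of the two rectangles, summing, and using $\pi_1\sigma_a=\mu$ from part (\ref{pushforward of sigma}) of \cref{results from KMRR}, I obtain
$$\sigma_a(E_1\times E_2)+\sigma_a(F_1\times F_2)\ge \mu(E_1)+\mu(F_1)+\sigma_a(X\times E_2)+\sigma_a(X\times F_2)-2.$$
The second identity in that same part, $\pi_2\sigma_a+T\pi_2\sigma_a=2\mu$ evaluated at $E_2$, yields $\sigma_a(X\times E_2)+\sigma_a(X\times T^{-1}E_2)=2\mu(E_2)$; since $F_2=T^{-1}E_2$ by hypothesis, the lower bound collapses to $\mu(E_1)+\mu(F_1)+2\mu(E_2)-2$, which is strictly positive by \eqref{ineq:9}.

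I do not expect a serious obstacle; the argument is essentially an algebraic re-balancing of the one for \cref{continuous Dynamical B+B}. The one point worth emphasizing is the precise matching of coefficients: the factor $2$ in front of $\mu(E_2)$ in \eqref{ineq:9} is exactly what is needed to absorb \emph{both} contributions $\sigma_a(X\times E_2)$ and $\sigma_a(X\times F_2)$ simultaneously via the pushforward identity, and the hypothesis $F_2=T^{-1}E_2$ is precisely what makes this absorption legal. \cref{Dynamical B+B+t} will then be deduced from \cref{continuous Dynamical B+B+t} by repeating verbatim the lift-to-a-continuous-Kronecker-factor argument given at the end of the proof of \cref{Dynamical B+B}: one passes to the extension $(\tilde X,\tilde\mu,\tilde T)$ from \cref{continuous Kronecker}, replaces each of $E_1,E_2,F_1,F_2$ by its preimage under $\tilde\pi$, notes that $\tilde\pi\circ \tilde T=T\circ\tilde\pi$ preserves the relation $F_2=T^{-1}E_2$, and pushes the resulting Erd\H{o}s progression back down.
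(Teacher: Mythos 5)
Your proof is correct and takes essentially the same approach as the paper: both reduce to positivity of $\sigma_a$ on the union of rectangles via \cref{EP full measure}, and both use the pushforward identities $\pi_1\sigma_a=\mu$ and $\pi_2\sigma_a+T\pi_2\sigma_a=2\mu$ from \cref{results from KMRR} (\ref{pushforward of sigma}) together with the hypothesis $F_2=T^{-1}E_2$. Your version is marginally cleaner in organization: the paper argues by cases (first try $\sigma_a(F_1\times F_2)>0$, else extract the bound $\pi_2\sigma_a(F_2)\le 1-\mu(F_1)$ and feed it into the estimate for $\sigma_a(E_1\times E_2)$), whereas you estimate the sum $\sigma_a(E_1\times E_2)+\sigma_a(F_1\times F_2)$ directly and let the telescoping $\sigma_a(X\times E_2)+\sigma_a(X\times F_2)=2\mu(E_2)$ do all the work in one step.
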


\begin{proof}
This time we want to find an Erd\H{o}s progression of the form $(a,x_1,x_2) \in X^3$ with either $(x_1,x_2) \in E_1 \times E_2$ or $(x_1,x_2) \in F_1 \times F_2$. Again, in view of \cref{EP full measure} it suffices to show that either $\sigma_a(E_1 \times E_2)>0$ or $\sigma_a(F_1 \times F_2)>0.$ Similarly to the proof of \cref{continuous Dynamical B+B}, the latter follows from $\sigma_a(F_1 \times X)+\sigma_a(X \times F_2) >1$. Else, we have that 
\begin{equation*} 
\sigma_a(F_1\times X)+ \sigma_a(X\times F_2) \leq 1,   
\end{equation*} 
which, using \cref{results from KMRR} (\ref{pushforward of sigma}), we rewrite as 
\begin{equation} \label{ineq:14} 
\pi_2\sigma_a(F_2) \leq 1-\pi_1\sigma_a(F_1)=1-\mu(F_1).    
\end{equation}
In this case we also have that 
\begin{align*}
\sigma_a(E_1 \times X)+\sigma_a(X\times E_2) &= \pi_1\sigma_a(E_1)+\pi_2\sigma_a(E_2) &\\
&=\mu(E_1)+2\mu(E_2)-T\pi_2\sigma_a(E_2) & \quad 
\text{by Prop. \ref{results from KMRR} 
(\ref{pushforward of sigma})} \\ 
&=\mu(E_1)+2\mu(E_2)-\pi_2\sigma_a(F_2) & \quad 
\text{since $T^{-1}E_2=F_2$}.
\end{align*}
Using \eqref{ineq:14} we obtain that 
$$\sigma_a(E_1 \times X)+\sigma_a(X\times E_2) \geq \mu(E_1)+2\mu(E_2)+\mu(F_1)-1.$$
By \eqref{ineq:9} it finally follows that
$$\sigma_a(E_1 \times X)+\sigma_a(X\times E_2)>1$$
and as before we conclude that $\sigma_a(E_1\times E_2)>0.$    
\end{proof}

The proof of \cref{Dynamical B+B+t} is similar to 
the one of \cref{Dynamical B+B}, and we simply 
need to use \cref{continuous Dynamical B+B+t} 
instead of \cref{continuous Dynamical B+B}.

\medskip

\section{The bounds are sharp and the role of even 
numbers} \label{sec even numbers}

In this section we show that the lower bounds in \cref{thrm B+B} and \cref{thrm B+B+t} are sharp with 
illustrative examples. Here the proportion of even and odd numbers in the sets plays a crucial role. That role becomes clear in the constructions and is even more explicit in \cref{thrm even and odd}, which we prove below. 
Finally we provide several generalizations of \cref{thrm even and odd}.

\medskip

\subsection{Sets of large densities not containing infinite sumsets B+B} \label{subsec examples}

The following proposition shows that the bounds 
established in parts $(1)$ of Theorems \ref{thrm B+B} and \ref{thrm B+B+t} are optimal. 

\begin{proposition} \label{large upper density without B+B}
There exist sets $A, A' \subset \N$ with $\overline{\diff}(A)=2/3$ and 
$\overline{\diff}(A')=5/6$ and such 
that there is no infinite $B\subset \N$ and no number $t\in \N$ 
with $B+B+t \subset A$ nor $B+B \subset A'$.    
\end{proposition}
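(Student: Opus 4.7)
The plan is to construct the sets $A$ and $A'$ explicitly, guided by \cref{thrm even and odd}: avoiding $B+B+t \subset A$ for every $t\in\N$ forces both $\overline{\diff}(A \cap 2\N) \leq 1/3$ and $\overline{\diff}(A \cap (2\N+1)) \leq 1/3$, capping $\overline{\diff}(A) \leq 2/3$; avoiding $B+B \subset A'$ forces only $\overline{\diff}(A' \cap 2\N) \leq 1/3$, capping $\overline{\diff}(A') \leq 5/6$. The examples therefore need to saturate these bounds.

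Part (2) will reduce to part (1). Given the set $A$ from part (1), I would define $A' := (2\N+1) \cup 2A$. Since $\diff(2A) = \diff(A)/2 = 1/3$, one has $\overline{\diff}(A') = 1/2 + 1/3 = 5/6$. If some infinite $B$ satisfied $B+B \subset A'$, split $B = B_o \cup B_e$ by parity: the same-parity sums $B_o + B_o$ and $B_e + B_e$ are even, so they must lie in $A' \cap 2\N = 2A$. Writing $B_o = \{2n_i+1\}$ when it is infinite, $B_o + B_o \subset 2A$ translates to $N + N + 1 \subset A$ for $N = \{n_i\}$, contradicting part (1) with $t=1$. Similarly, if $B_e = \{2m_i\}$ is infinite, one obtains $M+M \subset A$, contradicting part (1) with $t=0$.

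For part (1), I would take $A = \bigcup_k \{n \in [N_k, N_{k+1}) : n \not\equiv r_k \pmod{3}\}$ for a rapidly growing sequence $(N_k)$ and a carefully chosen residue sequence $(r_k) \subset \{0,1,2\}$, so that $\diff(A) = 2/3$. To show no infinite $B$ realizes $B+B+t \subset A$ for any $t\in\N$: if $B$ contains infinitely many elements in at least two residue classes modulo $3$, then $B+B$ eventually meets every residue modulo $3$, so for each large block $[N_l, N_{l+1})$ one finds some $b+b'+t$ landing in that block with residue exactly $r_l$, giving $b+b'+t \notin A$. If instead $B$ is eventually contained in a single residue class $s \pmod 3$, then $b+b'+t \equiv 2s+t \pmod 3$ for large $b,b'$, and one needs $r_k = 2s+t$ to occur in infinitely many blocks that the sums $B+B+t$ actually hit.

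The main obstacle lies in this last arrangement: the set of blocks hit by $B+B+t$ depends on $B$, so a naive periodic choice of $(r_k)$ can be defeated by a sparse, adversarially aligned $B$. The construction of $(N_k)$ and $(r_k)$ must therefore be robust; one natural approach is a diagonal construction over all pairs $(s,t) \in \{0,1,2\}^2$, assigning to each pair a positive-density subsequence of indices $k$ on which $r_k = 2s+t$, and taking $(N_k)$ to grow fast enough that any infinite $B$ necessarily produces sums in blocks of every prescribed type. This robustness, together with attaining $\overline{\diff}(A) = 2/3$ exactly (and not only strictly less), is presumably what the observation of Jan Kus, acknowledged in the introduction, streamlines.
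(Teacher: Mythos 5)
Your reduction of part (2) to part (1) via $A' := (2\N+1) \cup 2A$ is correct, and close in spirit to the paper's, which takes the even simpler $A':=A\cup(2\N+1)$ so that $A'\cap 2\N=A\cap 2\N\subset A$. (Small notational point: $A$ has no natural density, so you should write $\overline{\diff}(2A)=\overline{\diff}(A)/2$ and then apply \cref{density lemma}\ (\ref{upper density lemma}).)

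The genuine gap is in part (1), and it is not repaired by diagonalizing over pairs $(s,t)$. Fix one pair $(s,t)$ and set $\rho=2s+t\pmod 3$. A sparse $B\subset s+3\N$ can be built greedily to dodge every block whose deleted residue is $\rho$. Having chosen $b_1<\cdots<b_i$, the requirement on the next element $b_{i+1}$ is that $b_1+b_{i+1}+t,\ldots,b_i+b_{i+1}+t$, and $2b_{i+1}+t$ all lie in $A$; but these $i+1$ numbers are confined to the interval $\bigl[b_1+b_{i+1}+t,\,2b_{i+1}+t\bigr]$, whose endpoints have ratio strictly less than $2$. The $(r_k)$ cannot all equal $\rho$ (otherwise other pairs $(s',t')$ defeat the construction outright), so infinitely many blocks are good for $(s,t)$; and once $(N_k)$ is ``rapidly growing'' so that $N_{k+1}/N_k\geq 2$ eventually, any single good block has multiplicative width at least $2$, and $b_{i+1}$ can be placed so that the entire constraint interval fits inside it. No choice of $(r_k)$ escapes this, because the congruence construction has no mechanism that couples what happens near $2b$ to what happens near $b+b'$ when $b$ is large. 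The paper's construction $A=\N\cap\bigcup_n[4^n,(2-1/n)4^n)$ supplies precisely that missing coupling: the complementary gaps $[(2-1/n)4^n,\,4^{n+1})$ approach, multiplicatively, the interval $[2\cdot 4^n,\,4\cdot 4^n)$, so if $2b+t\in[4^{n+1},(2-\tfrac{1}{n+1})4^{n+1})\subset A$ then $b\approx 2\cdot 4^n$, and for fixed $b'$ and $t$ and $n$ large, $b+b'+t$ lands squarely inside the gap preceding the $(n+1)$-st interval. There the factor-$2$ relationship between $2b+t$ and $b+b'+t$ is an asset rather than an obstacle, and it defeats every infinite $B$ regardless of how sparse it is. It is this ``multiplicative forbidden zone,'' not a residue-class scheme, that realizes part (1); and the role of the term $1/n$ is only to make the upper density exactly $2/3$ rather than strictly smaller.
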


\begin{proof}

Let $A$ be the subset of $\N$ defined by
\begin{equation*}
    A = \N \cap \bigcup_{n \in \N} [4^n, (2-1/n) \cdot 4^n).
\end{equation*}
It is clear by the definition of $A$ that $\overline{\diff}(A) = 
\diff_{\Phi} (A)$, where $(\Phi_N)$ is the F\o lner sequence 
given by $N \mapsto  [1, (2-1/N) 4^N) \cap \N$. By standard 
calculations we see that $\overline{\diff}(A)=2/3.$ 

Assume there is an infinite $B\subset \N$ and some $t\in \N$ with $B+B+t \subset A$. Then, in particular, for any $b'\in B$ fixed 
there is $b\in B$ arbitrarily large such that $\{2b+t,b+b'+t\}\subset A$. Therefore, $2b\in [4^{n+1}, (2-\frac{1}{n+1}) 4^{n+1}))-t$, for some $n \in \N$ and diving by two we have
$$ 2\cdot 4^n+t/2 \leq b + t < 2 \left(2-\frac{1}{n+1}\right) 4^n + t/2. $$
Since $t$ is fixed and $b'$ is negligible with respect to $b$ (and 
hence with respect to $n$), we see that 
$b+b'+t \in [2\cdot 4^n,\ 4^{n+1}) \subset \N \setminus A$, reaching a 
contradiction. To see this, observe that $\displaystyle 4^{n+1}-2 \left(2-\frac{1}{n+1}\right) 4^n = 2 \cdot \frac{4^{n}}{n+1} \xrightarrow{n\to \infty} \infty$.
This completes the first construction. 

Keeping $A$ as defined before we now set
$A'=A \cup \left( 2\N+1 \right)$. In other words, $A'=A \cup \left( \left(\N \setminus A \right) \cap \left( 2\N+1 \right) \right)$. By the definition of $A$ we have that $\N \setminus A$ 
is a union of discrete intervals of lower density $1/3$ and so 
$\underline{\diff}(\left(\N \setminus A \right) \cap  2\N )=1/6$. Since the complement of $A'$ is precisely $\left( \N \setminus A \right) \cap  2\N $, it follows that $\overline{\diff}(A')=5/6$. 

We now show there is no infinite set $B\subset \N$ with $B+B \subset A'$. Indeed, if there were such a set, we could consider 
an infinite subset $B'\subset B$ consisting of integers with the 
same parity and so we would have that $B'+B' \subset A' \cap 2\N \subset A$. 
This is a contradiction to the first part of this proof.
\end{proof}

Analogously to \cref{large upper density without B+B}, the next result shows that the bounds 
established in parts $(2)$ of Theorems \ref{thrm B+B} and \ref{thrm B+B+t} are optimal as well. 

\begin{proposition} \label{large lower density withou B+B }
There exist sets $A, A' \subset \N$ with $\underline{\diff}(A)=1/2$ and $\underline{\diff}(A')=3/4$ and such 
that there is no infinite $B\subset \N$ and no number $t\in \N$ 
with $B+B+t \subset A$ nor $B+B \subset A'$.    
\end{proposition}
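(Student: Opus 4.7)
My plan is to mirror the construction of \cref{large upper density without B+B}, adjusting the geometry so that the \emph{lower} (rather than upper) density is pinned at the sharp value. The natural first guess---replacing base $4$ by base $3$ via $A=\bigcup_n [3^n,(2-1/n)\cdot 3^n)$---yields $\underline{\diff}(A)=1/2$ (achieved along $N=3^{n+1}$) and $\overline{\diff}(A)=3/4$, but the avoidance breaks down: the doubled interval $2I_n=[2\cdot 3^n,(4-2/n)3^n)$ protrudes past $3^{n+1}$ into $I_{n+1}$ rather than sitting inside the gap $[(2-1/n)3^n,3^{n+1})$, so an infinite $B$ concentrated near the upper ends of the $I_n$ can still produce $B+B\subset A$. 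Hence the construction must be refined.

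My working plan for the first assertion is to assemble $A$ as a union of intervals supplemented by additional contributions (for instance, a secondary arithmetic-type subset at a slower scale), balanced so that simultaneously (i) the lower density is exactly $1/2$ and (ii) for every sufficiently large $b\in\N$ with $2b+t\in A$, the value $b$ is confined to a range lying entirely inside a gap of $A$. Once these are enforced, the contradiction proceeds as in \cref{large upper density without B+B}: fix $b'\in B$; for $b\in B$ large, $2b+t\in A$ forces $b$ into a gap of $A$, and $b+b'+t$, differing from $b$ by the fixed amount $b'+t$, remains in that gap, contradicting $b+b'+t\in A$. The main obstacle is precisely the simultaneous enforcement of (i) and (ii): the naive geometric union achieves only one of them, and I expect the delicate balancing referenced in the acknowledgment to Jan Kus is what allows both to hold at once, possibly yielding natural density exactly $1/2$.

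Given such an $A$, the second assertion follows from a parity doubling: set $A':=2A\cup(2\N+1)$. A direct parity count gives
\[
\underline{\diff}(A')=\frac{1}{2}+\frac{1}{2}\,\underline{\diff}(A)=\frac{3}{4}.
\]
To see that no infinite $B$ has $B+B\subset A'$, extract from any infinite $B\subset\N$ an infinite monochromatic subset $B_0$. If $B_0=2C\subset 2\N$, then $B_0+B_0=2(C+C)\subset A'\cap 2\N=2A$ forces $C+C\subset A$, contradicting the first assertion with $t=0$. If $B_0=2C+1\subset 2\N+1$, then $B_0+B_0=2(C+C+1)\subset 2A$ forces $C+C+1\subset A$, contradicting the first assertion with $t=1$. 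In either case, $B+B\not\subset A'$.
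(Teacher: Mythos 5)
Your observation that the naive base--$3$ geometric construction fails is correct, and your reduction of the second assertion to the first (via $A':=2A\cup(2\N+1)$, with the parity extraction argument and the density count $\underline{\diff}(A')=\tfrac{1}{2}+\tfrac{1}{2}\underline{\diff}(A)$) is sound and in fact a cleaner, more modular route than the paper's independent construction of $A'$. However, the core of the proposition --- the actual construction of a set $A$ with $\underline{\diff}(A)=1/2$ avoiding all $B+B+t$ --- is missing. You describe the properties you would like $A$ to have but never exhibit such an $A$, and the ``working plan'' is a wish list rather than an argument.

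The paper supplies the missing construction by a different device than you anticipate: it takes \emph{two} complementary geometric avoidance sets
\[
A_1=\N\cap\bigcup_n[4^n,(2-1/n)4^n),\qquad
A_2=\N\cap\bigcup_n[(2+1/n)4^n,4^{n+1}),
\]
neither of which contains any $B+B+t$ (by \cref{large upper density without B+B} and symmetry), and whose union has density $1$. It then interleaves them along parity classes, $A=(A_1\cap 2\N)\cup(A_2\cap(2\N+1))$, which has natural density $1/2$; any infinite $B$ and shift $t$ would force, after passing to a monochromatic subset $B'$, the containment $B'+B'+t\subset A_1$ or $B'+B'+t\subset A_2$, a contradiction. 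This is not achieved by enlarging a single geometric union to raise the lower density to $1/2$ (your point (i)--(ii) tension); it is achieved by splitting density evenly between two disjoint avoidance sets along residue classes. If you insert that construction of $A$, your argument for $A'$ goes through and yields an alternative proof of the second assertion. (The paper instead takes $A'=(A_1\setminus 4\N)\cup(A_2\setminus(4\N+2))$ and argues directly mod~$4$.)
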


\begin{proof}
Let $A_1,A_2 \subset \N$ be defined by  
\begin{equation*}
    A_1 = \N \cap \bigcup_{n \in \N} [4^n, (2-1/n) \cdot 4^n)
\end{equation*}
and 
\begin{equation*}
    A_2 = \N \cap \bigcup_{n \in \N} [(2+1/n) \cdot 4^n, 4^{n+1}).
\end{equation*}
It was shown in \cref{large upper density without B+B} that 
there is no infinite $B\subset \N$ and $t\in \N$ so that 
$B+B+t \subset A_1$. By symmetry, the same argument shows 
that there is no infinite $B\subset \N$ and $t\in \N$ so that 
$B+B+t \subset A_2$ either. We remark that the set 
$$\N \setminus \left( A_1 \cup A_2 \right) = \N \cap \bigcup_{n \in \N} \left[ (2-1/n) \cdot 
4^n, (2+1/n) \cdot 4^{n} \right]$$ 
has zero density. This boils down to the fact that 
$$\lim_{N\to \infty} \sum_{n=1}^N \frac{1}{n \cdot 4^{N-n}}= 0.$$
To see this note that $n \mapsto 1 \big/ \left( n\cdot 4^{N-n} \right)$ is increasing in $[1,N]$ and so 
$$\lim_{\theta \nearrow 1} \lim_{N\to \infty} \sum_{n=1}^{\lfloor \theta N \rfloor} \frac{1}{n \cdot 4^{N-
n}} + \sum_{n>\lfloor \theta N \rfloor}^N \frac{1}{n 
\cdot 4^{N-n}} \leq \lim_{\theta \nearrow 1} \lim_{N\to \infty} \frac{1}{4^{(1-\theta)N}} + 
\frac{(1-\theta)N}{N}=0.$$
We now consider the set $A\subset \N$ defined by 
$$A = \left(A_1 \cap 2\N \right) \cup \left( A_2 \cap (2\N+1) \right).$$
It follows that $\diff(A)=1/2$ and 
so, in particular, $\underline{\diff}(A)=1/2$. Moreover, we 
claim that there is no infinite set $B\subset \N$ and no 
$t\in \N$ such that $B+B+t \subset A$. Indeed, if this were 
the case we could consider an infinite subset of $B$, call it 
$B'$, with elements of the same parity. Then, if $t\in 2\N$ 
we should have that $B'+B'+t \subset A\cap 2\N \subset A_1$ 
and if $t+1\in 2\N$ we should have $B'+B'+t \subset A\cap 
(2\N+1) \subset A_2$, a contradiction. 

For the second part of the construction we keep $A_1$ and $A_2$ 
as they were defined above. Then, we let $A'\subset \N$ be 
defined by 
$$A'= \left(A_1 \setminus 4\N \right) \cup 
\left( A_2 \setminus (4\N+2) \right).$$
As we showed before $\diff(A_1 \cup A_2)=1$ and from this it 
easily follows that $\diff(A')=3/4$, because we are only 
removing from $A_1 \cup A_2$ a set of density $1/4$. Finally, 
we claim there is no infinite set $B\subset \N$ satisfying 
$B+B\subset A'$. Indeed, given an infinite set $B$ we can
consider an infinite subset $B' \subset B$ with all its 
elements equivalent mod $4$. That is, there exists $j\in 
\{0,1,2,3\}$ so that $b'\equiv j \pmod4$, for any $b'\in B'$. 
Note that if $j\in \{0,2\}$, then $B'+B' \subset 4\N$ and if 
$j\in \{1,3\},$ then $B'+B' \subset 4\N+2$. In the former case 
we must have $B'+B' \subset A_2$ and in the latter that 
$B'+B' \subset A_1$, which both contradict the results 
established in the first part. 
\end{proof}

\begin{remark*} \label{2-coloring of full density set}
    In the first part of the previous proof, we could instead have considered the set $ \Tilde{A} \subset \N$ defined as $ \Tilde{A}= \left(A_1 \cap (2\N+1) \right) \cup \left( A_2 \cap 2\N \right)$. Notice that as before, $ \Tilde{A}$ does not contain an infinite sumset of the form $B+B+t$ either. The set $A \cup \Tilde{A}$ has density $1$, so with this construction we find a $2$-coloring of a set with full density that does not have a monochromatic infinite sumset $B+B+t$. 
\end{remark*}

Notice that the sets constructed in 
\cref{large lower density withou B+B } are both sets whose 
natural density exists. Thus, 
considering this and Theorems \ref{thrm B+B} and 
\ref{thrm B+B+t} we see that information on the value of the 
natural density of a set, when it exists, is no more powerful 
than information on the value of the lower density of a set, 
which always exists. This was also to be expected from Theorems
\ref{thrm B+B d_ and d^} and \ref{B+B+t d_ and d^}. For 
example, from \ref{B+B+t d_ and d^} we already knew that if 
there existed a set $A\subset \N$ with 
$\underline{\diff}(A)=1/2$ and such that no sumset $B+B+t$ was 
contained in $A$, it would have to be the case that 
$\overline{\diff}(A)=1/2$ too, hence $A$ would have a natural density $\diff(A)=1/2$.

\medskip

\subsection{Density over the even numbers}  \label{sec density even numbers}

We begin this section with a simple combinatorial 
lemma about properties of the upper and lower density. The proof is elementary and so we omit it. 

\begin{lemma} \label{density lemma} Let $\mathcal{P}(\N)$ denote the power-set of $\N$.
\begin{enumerate} 
    \item \label{upper density lemma} The function $\overline{\diff} \colon  \mathcal{P}(\N) \to [0,1]$ is sub-additive. That is, for any disjoint sets $A, B\subset \N$, we have $\overline{\diff}(A\cup B) \leq \overline{\diff}(A)+\overline{\diff}(B)$. In the special case that the density of $B$ exists we have $\overline{\diff}(A\cup B) = \overline{\diff}(A)+\diff(B)$. 
    \item \label{lower density lemma} The function $\underline{\diff} \colon  \mathcal{P}(\N) \to [0,1]$ is super-additive. That is, for any disjoint sets $A, B\subset \N$, we have $\underline{\diff}(A\cup B) \geq \underline{\diff}(A)+\underline{\diff}(B)$. In the special case that the density of $B$ exists we have $\underline{\diff}(A\cup B) = \underline{\diff}(A)+\diff(B)$.
    \item \label{density lemma shift part} If $A,B\subset \N$ are such that $B=A+t$, for some $t\in \N$ and $A \cap B = \emptyset$, we have 
    $\overline{\diff}(A\cup B) = 2\cdot \overline{\diff}(A)$ and
    $\underline{\diff}(A\cup B) = 2\cdot \underline{\diff}(A)$.
\end{enumerate}    
\end{lemma}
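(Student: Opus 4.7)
The plan is to reduce all three claims to straightforward manipulations of the counting function $f_A(N) := |A \cap [1,N]|$, combined with the standard inequalities $\limsup(x_N + y_N) \leq \limsup x_N + \limsup y_N$ and $\liminf(x_N + y_N) \geq \liminf x_N + \liminf y_N$, together with their upgrade to equalities whenever one of the two sequences is convergent.

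For part \eqref{upper density lemma}, the key observation is that disjointness of $A$ and $B$ yields the exact pointwise identity $f_{A \cup B}(N) = f_A(N) + f_B(N)$ for every $N \in \N$. Dividing by $N$ and taking $\limsup$ immediately gives sub-additivity via the general $\limsup$ inequality above. When the density of $B$ exists, the sequence $f_B(N)/N$ converges, and the $\limsup$ of a sum with a convergent sequence is additive, which upgrades sub-additivity to the claimed equality. Part \eqref{lower density lemma} is then obtained by the symmetric argument, replacing $\limsup$ with $\liminf$ throughout and invoking super-additivity of $\liminf$.

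For part \eqref{density lemma shift part}, the additional ingredient is that a translation by a fixed $t \in \N$ changes the counting function by a uniformly bounded amount. Indeed, since $B = A + t$, we have $f_B(N) = |A \cap [1-t,\, N-t]|$, which differs from $f_A(N)$ by at most $t$ for every $N$. Combining this with the disjointness identity $f_{A \cup B}(N) = f_A(N) + f_B(N)$ yields $f_{A \cup B}(N) = 2 f_A(N) + O(t)$, and dividing by $N$ and taking $\limsup$ or $\liminf$ gives both of the claimed equalities at once (the error term $O(t)/N$ vanishes in the limit since $t$ is fixed). No step in the lemma presents a genuine obstacle; the entire argument is elementary bookkeeping with the counting function, which is presumably why the authors opted to omit it.
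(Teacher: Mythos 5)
The paper states this lemma without proof, remarking only that ``the proof is elementary and so we omit it,'' so there is no written argument in the paper to compare against. Your proposal is correct and is precisely the elementary bookkeeping the authors clearly had in mind: parts \eqref{upper density lemma} and \eqref{lower density lemma} follow from the pointwise identity $|(A\cup B)\cap[1,N]|=|A\cap[1,N]|+|B\cap[1,N]|$ for disjoint sets together with (sub/super-)additivity of $\limsup$ and $\liminf$, upgraded to equality when one summand converges; and part \eqref{density lemma shift part} uses in addition that $|B\cap[1,N]|=|A\cap[1,N-t]|$ differs from $|A\cap[1,N]|$ by at most $t$, so the error vanishes after dividing by $N$. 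One small point worth flagging: part \eqref{density lemma shift part} does not follow from parts \eqref{upper density lemma} and \eqref{lower density lemma} alone (those only give one-sided inequalities, since $\overline{\diff}(B)=\overline{\diff}(A)$ does not by itself force the $\limsup$ to be attained along a common subsequence), and your proof correctly sidesteps this by exploiting the uniform bound $\bigl||A\cap[1,N]|-|B\cap[1,N]|\bigr|\le t$ rather than merely the equality of densities.
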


Considering the previous examples, we can notice that in order to remove the shift, the set $A$ has to contain a sufficiently large amount of even numbers. This intuition is confirmed by \cref{thrm even and odd}. In the following we state a seemingly more general result than \cref{thrm even and odd}. 

\begin{proposition}  \label{thrm A cap 2D}
    Let $A,D \subset \N$. 
    
    If one of the following holds:
    
    \begin{enumerate}
        \item \label{pt 1 2D} $\overline{\diff} (A \cap 2D) > 1/3$ or
        \item \label{pt 2 2D} $\underline{\diff} (A \cap 2D) > 1/4$, 
    \end{enumerate}

then there is an infinite set $B \subset D$ such that $B + B \subset A$. 

    Likewise, if one of the following holds:
    \begin{enumerate}
        \setcounter{enumi}{2}
        \item \label{pt 1' 2D} $\overline{\diff} \left(A \cap ( 2D+1 )\right) > 1/3$ or
        \item \label{pt 2' 2D} $\underline{\diff} \left(A \cap (2D+1)\right) > 1/4$, 
    \end{enumerate}
then there is an infinite set $B \subset D$ such that $B + B +1 \subset A$. 
\end{proposition}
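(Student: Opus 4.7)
The plan is to reduce Proposition \ref{thrm A cap 2D} to \cref{thrm B+B+t} via a density-doubling trick. Introduce the auxiliary set
$$A^* := \{n \in D : 2n \in A\} \subset D,$$
which is in bijection with $A \cap 2D$ via $n \mapsto 2n$. Because $A \cap 2D$ consists entirely of even numbers, we have the exact equality $|A^* \cap [1, N]| = |(A \cap 2D) \cap [1, 2N]|$ for every $N$, and since a set supported on $2\N$ has the same limsup (resp.\ liminf) of $|\cdot \cap [1, M]|/M$ whether $M$ ranges over $\N$ or just over $2\N$, one concludes $\overline{\diff}(A^*) = 2\,\overline{\diff}(A \cap 2D)$ and $\underline{\diff}(A^*) = 2\,\underline{\diff}(A \cap 2D)$. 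Thus either of hypotheses (1) or (2) forces $\overline{\diff}(A^*) > 2/3$ or $\underline{\diff}(A^*) > 1/2$, and \cref{thrm B+B+t} applies to $A^*$ to produce an infinite $B' \subset \N$ and $t \in \{0, 1\}$ with $B' + B' + t \subset A^*$.

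The crucial additional fact, which must be extracted from the proof of \cref{thrm B+B+t} rather than from its statement, is that $B'$ can simultaneously be arranged to satisfy $2 B' + t \subset A^*$. Indeed, \cref{EP=>SUMSETS} produces $B'$ inside $\{n : (S^2 \times S)^n (a, a) \in U\}$, where $U = E_1$ when $t = 0$ and $U = F_1$ when $t = 1$; unwinding these definitions gives $B' \subset \{n : 2n \in A^*\}$ or $B' \subset \{n : 2n + 1 \in A^*\}$, respectively, i.e., $2 B' + t \subset A^*$. This is the same bookkeeping already used in the proof of \cref{thrm B+B} to upgrade $B \subset A/2$ to $2B \subset A$. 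Setting $B := 2 B' + t$, the inclusion $2 B' + t \subset A^*$ gives $B \subset A^* \subset D$ for free, and
$$B + B = 2(B' + B') + 2t = 2(B' + B' + t) \subset 2 A^* \subset A$$
(using $n \in A^* \Rightarrow 2n \in A$) finishes parts (1) and (2).

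Parts (3) and (4) are identical with the replacement $A^{**} := \{n \in D : 2n + 1 \in A\} \subset D$, which satisfies the analogous density identities relative to $A \cap (2D + 1)$. One applies \cref{thrm B+B+t} to $A^{**}$, obtains $B''$ and $t$ with $B'' + B'' + t \subset A^{**}$ and $2 B'' + t \subset A^{**}$, and sets $B := 2 B'' + t$; then $B \subset A^{**} \subset D$ and $B + B + 1 = 2(B'' + B'' + t) + 1 \subset 2 A^{**} + 1 \subset A$. The only real obstacle is presentational: invoking \cref{thrm B+B+t} as a black box yields only $B' + B' + t \subset A^*$, which would give $B + B \subset A$ for some $B \subset \N$ but not $B \subset D$. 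The extra inclusion $2 B' + t \subset A^*$, which costs nothing in the original dynamical proof, is exactly what forces $B$ into $D$ and distinguishes \cref{thrm A cap 2D} from \cref{thrm even and odd}.
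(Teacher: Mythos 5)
Your proof is correct, but it takes a genuinely different route from the paper. The paper's argument goes ``up'': it pads $A \cap 2D$ by adjoining all odd numbers to form $\tilde{A} = (A\cap 2D) \cup (2\N+1)$, so that $\overline{\diff}(\tilde{A}) > 5/6$ (resp.\ $\underline{\diff}(\tilde{A}) > 3/4$), applies \cref{thrm B+B} to $\tilde{A}$, and then passes to a subset of $B$ of constant parity so that $B+B$ lands inside the even part $\tilde{A}\cap 2\N = A\cap 2D$, which forces $B+B \subset A$ and $2B \subset 2D$, hence $B \subset D$. Your argument goes ``down'': you contract $A\cap 2D$ by the bijection $2n \mapsto n$ to form $A^* = \{n\in D : 2n\in A\}$, which doubles the densities, apply \cref{thrm B+B+t} to $A^*$, and scale back up. Both approaches are clean; yours uses \cref{thrm B+B+t} where the paper uses \cref{thrm B+B}, and yours avoids the pigeonhole/parity reduction at the cost of having to track the shift $t$.

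One correction: the ``crucial additional fact'' you claim must be extracted from the \emph{proof} of \cref{thrm B+B+t} --- namely that $2B' + t \subset A^*$ --- is in fact immediate from the \emph{statement}. Since the paper defines $B'+B' = \{b_1+b_2 : b_1, b_2 \in B'\}$ with repeated entries allowed, we have $2B' \subset B'+B'$ and hence $2B'+t \subset B'+B'+t \subset A^*$ with no further work. Your detour through \cref{EP=>SUMSETS} and the unwinding of $E_1, F_1$ is therefore unnecessary; the black-box invocation of \cref{thrm B+B+t} already delivers everything you use, including the conclusion $B = 2B'+t \subset A^* \subset D$. The rest of the argument --- the density identity $\overline{\diff}(A^*) = 2\overline{\diff}(A\cap 2D)$ (using that $A\cap 2D$ is supported on $2\N$, so the limsup/liminf along even $M$ agrees with the limsup/liminf along all $M$), the inclusion $B+B = 2(B'+B'+t) \subset 2A^* \subset A$, and the shift-by-one reduction for parts (3) and (4) --- is all correct as written.
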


\begin{proof}
$(1):$ For $A, D \subset \N$ such that $\overline{\diff} (A \cap 2 D) > 1/3$ we let $\Tilde{A}= (A \cap 2D) \cup (2\N +1) $. Then by part (\ref{upper density lemma}) of \cref{density lemma} we see that $\overline{\diff} (\Tilde{A}) > 5/6$  and so by \cref{thrm B+B} there is an infinite set $B \subset \N$ such that $B+B \subset \Tilde{A}$. Without loss of generality we can assume that the elements on $B$ have the same parity, which means that the set $B+B$ consists solely of even numbers. Therefore, $B+B \subset \Tilde{A} \cap 2 \N = A \cap 2D$. In particular, $B+B \subset A$ and $2B \subset 2D$, which also implies that $B \subset D$.   

$(3):$ For $A, D \subset \N$ such that $\overline{\diff} \left(A \cap (2 D+1) \right) > 1/3$ we can shift to obtain that $\overline{\diff} \left((A-1 ) \cap 2 D \right) > 1/3$. Thus, by $(1)$ there is $B\subset D$ infinite with $B+B \subset A-1$ which implies that $B+B+1 \subset A$. 

$(2):$ For $A, D \subset \N$ such that $\underline{\diff} (A \cap 2 D) > 1/4$ we define $\Tilde{A}= (A \cap 2D) \cup (2\N +1) $. Using part \ref{lower density lemma} of 
\cref{density lemma} we have that $\underline{\diff} 
(\Tilde{A}) > 3/4$, so that by \cref{thrm B+B} there is an 
infinite set $B \subset \N$ such that $B+B \subset \Tilde{A}$, 
and repeating the parity argument in $(1)$ we conclude.

$(4):$ This follows from $(2)$ by shifting, exactly like $(3)$ 
follows from $(1)$.
\end{proof}

A special case of interest of Proposition \ref{thrm A cap 2D} is when $D=A$, because 
it gives a criterion to ensure the existence of an 
infinite set $B$ such that $B \cup (B+B) \subset 
A$. Moreover, as was mentioned before, \cref{thrm even and odd} is a consequence of \cref{thrm A cap 2D}. This is clear for \eqref{pt 1 even odd} and \eqref{pt 2 even odd} in \cref{thrm even and odd} by replacing the set $D$ by $\N$. Now notice that $\overline{\diff} (A \cap 2 \N) > 1/3$ implies the existence of an infinite set $B \subset \N$ such that $B+B \subset A$ if and only if $\overline{\diff} (A' \cap (2 \N+1)) > 1/3$ implies $B+B+1 \subset A'$ (where $A' = A + 1 $). That means \eqref{pt 1 even odd} and \eqref{pt 3 even odd} of \cref{thrm even and odd} are equivalent. The same can be done to prove that \eqref{pt 2 even odd} is equivalent to \eqref{pt 4 even odd}.

We would also like to point out that using the sub-additivity 
of the upper density one can also use the equivalent parts $(1)$ and $(3)$ of \cref{thrm even and odd} to deduce \cref{thrm B+B+t} $(1)$ and \cref{thrm B+B} $(1)$. We only show the first implication as the other one is very similar.  

\begin{proof}[Proof that \cref{thrm even and odd} implies \cref{thrm B+B+t} $(1)$]
If $A\subset \N$ has $\overline{\diff}(A)>2/3$, then writing $A=(A\cap 2\N) \cup (A\cap 2\N+1)$, we have by \cref{density lemma} (\ref{upper density lemma}) that either $\overline{\diff}(A\cap 2\N)>1/3$ or $\overline{\diff}(A\cap 2\N+1)>1/3$. 
In the former we see by $(1)$ of \cref{thrm even and odd} that there exists $B\subset \N$ infinite and such that $B+B \subset 
A$ and in the latter we apply $(3)$ of the same corollary, to get an infinite $B\subset \N$ such that $B+B+1 \subset A$.
\end{proof}

We finish this section by showing the special role that the 
powers of $2$ plays in the existence of the set $B$. The next 
proposition is a generalization of \cref{thrm even and odd}, 
but they share a similar proof.

\begin{proposition} \label{prop 2n}
    Let $A \subset \N$, $n \geq 1$ and $0 \leq \ell < 2^{n-1} $. 
    
    If one of the following holds:
    \begin{enumerate}
        \item \label{point 1} $\overline{\diff} (A \cap (2^n \N+2 \ell)) > 1/(3 \cdot 2^{n-1})$ or
        \item \label{point 2} $\underline{\diff} (A \cap (2^n\N +2\ell)) > 1/2^{n+1}$, 
    \end{enumerate}
then there is an infinite set $B \subset \N$ such that $B + B \subset A$. 
Similarly,  if one of the following holds:

\begin{enumerate}
    \setcounter{enumi}{2}
        \item \label{point 3}$\overline{\diff} (A \cap (2^n \N+2 \ell + 1)) > 1/(3 \cdot 2^{n-1})$ or
        \item \label{point 4} $\underline{\diff} (A \cap (2^n\N+2\ell + 1)) > 1/2^{n+1}$,
    \end{enumerate} 
then there is an  infinite set $B \subset \N$ such that $B + B + 1 \subset A$.
\end{proposition}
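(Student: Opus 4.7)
The plan is to reduce Proposition \ref{prop 2n} to Theorem \ref{thrm B+B+t} via a rescaling of the arithmetic progression $2^n\N + 2\ell$ onto $\N$, in the same spirit as the derivation of Corollary \ref{thrm even and odd} from Theorem \ref{thrm B+B} through Proposition \ref{thrm A cap 2D}.

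To handle parts (\ref{point 1}) and (\ref{point 2}), I would introduce the rescaled set $\tilde{A} := \{m \in \N : 2^n m + 2\ell \in A\}$. Since $m \mapsto 2^n m + 2\ell$ is an order-preserving bijection from $\N$ onto the tail of $2^n\N + 2\ell$ starting at $2^n + 2\ell$, a routine counting estimate comparing the intervals $[1,N]$ and $[1,2^n N + 2\ell]$ yields $\overline{\diff}(\tilde{A}) = 2^n \cdot \overline{\diff}(A \cap (2^n\N + 2\ell))$, together with the analogous identity for the lower density. Under hypothesis (\ref{point 1}) this produces $\overline{\diff}(\tilde{A}) > 2^n/(3 \cdot 2^{n-1}) = 2/3$, while under (\ref{point 2}) it produces $\underline{\diff}(\tilde{A}) > 2^n/2^{n+1} = 1/2$.

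In either case, Theorem \ref{thrm B+B+t} supplies an infinite $\tilde{B} \subset \N$ and a number $t \in \{0,1\}$ with $\tilde{B} + \tilde{B} + t \subset \tilde{A}$. I then set $B := \{2^n \tilde{b} + 2^{n-1} t + \ell : \tilde{b} \in \tilde{B}\} \subset \N$, which is automatically infinite, and compute $B + B = \{2^n(\tilde{b}_1 + \tilde{b}_2 + t) + 2\ell : \tilde{b}_1, \tilde{b}_2 \in \tilde{B}\}$; every such element lies in $A$ by the defining property of $\tilde{A}$, so $B + B \subset A$. This settles parts (\ref{point 1}) and (\ref{point 2}).

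Parts (\ref{point 3}) and (\ref{point 4}) follow by shifting: the map $a \mapsto a-1$ sends $A \cap (2^n\N + 2\ell + 1)$ bijectively onto $(A-1) \cap (2^n\N + 2\ell)$ while preserving both upper and lower densities, so applying (\ref{point 1}) or (\ref{point 2}) to $A - 1$ produces an infinite $B$ with $B + B \subset A - 1$, i.e. $B + B + 1 \subset A$. I do not foresee a genuine obstacle: the only technical verification is the rescaling identity $\overline{\diff}(\tilde{A}) = 2^n \cdot \overline{\diff}(A \cap (2^n\N + 2\ell))$, which is a direct consequence of $2^n\N + 2\ell$ being an arithmetic progression of common difference $2^n$. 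The substantive content already resides in Theorem \ref{thrm B+B+t}, and Proposition \ref{prop 2n} is, in effect, its dilation along this progression.
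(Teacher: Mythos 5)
Your proof is correct, but it takes a genuinely different route from the paper. The paper reduces to $\ell=0$, proves \eqref{point 1} by induction on $n$ with base case \cref{thrm even and odd}, and in the inductive step builds from $A\cap 2^{n+1}\N$ the auxiliary set $\Tilde{A} = (A\cap 2^{n+1}\N)\cup\bigl((A\cap 2^{n+1}\N)+2^n\bigr)$, doubling the density via \cref{density lemma}\eqref{density lemma shift part}; it then applies the inductive hypothesis to get $B'$ with $B'+B'\subset\Tilde{A}$, pigeonholes to find a subset $B''$ constant mod $2^n$, observes that only the residues $0$ and $2^{n-1}$ are compatible with $B''+B''\subset 2^n\N$, and translates back. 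You instead rescale the progression directly: set $\tilde{A}=\{m: 2^n m+2\ell\in A\}$, note that $\overline{\diff}(\tilde{A})=2^n\,\overline{\diff}(A\cap(2^n\N+2\ell))$ (and likewise for lower density) because $A\cap(2^n\N+2\ell)$ is supported on an arithmetic progression of common difference $2^n$, so hypothesis \eqref{point 1} gives $\overline{\diff}(\tilde{A})>2/3$ and hypothesis \eqref{point 2} gives $\underline{\diff}(\tilde{A})>1/2$; then \cref{thrm B+B+t} produces $\tilde{B}$ and $t\in\{0,1\}$ with $\tilde{B}+\tilde{B}+t\subset\tilde{A}$, and $B=\{2^n\tilde{b}+2^{n-1}t+\ell:\tilde{b}\in\tilde{B}\}$ absorbs the shift $t$ and satisfies $B+B=\{2^n(\tilde b_1+\tilde b_2+t)+2\ell\}\subset A$. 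Your argument is shorter and avoids both the induction and the pigeonhole step, at the cost of invoking \cref{thrm B+B+t} rather than the weaker \cref{thrm even and odd}; the paper's approach is presented as a strict generalization of the corollary and so keeps the chain of increasingly general statements self-contained. Both are valid; the only step you flag but do not spell out---that the limsup/liminf of $|C\cap[1,M]|/M$ along $M\in 2^n\N+2\ell$ equals the unrestricted one when $C\subset 2^n\N+2\ell$---is indeed routine, since the counting function changes by at most one per block of length $2^n$.
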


\begin{proof}
    It is clear that (\ref{point 1}) implies (\ref{point 3}), because if $\overline{\diff} (A \cap (2^n \N+2 \ell + 1)) > 1/(3 \cdot 2^{n-1})$ then $\overline{\diff} ( (A-1) \cap (2^n \N+2 \ell)) > 1/(3 \cdot 2^{n-1})$ and then by (\ref{point 1}) there is an infinite set $B \subset \N$ such that $B+B \subset A-1$. A similar observation allows us to conclude that (\ref{point 2}) implies (\ref{point 4}).

    It is also clear that it is enough to prove (\ref{point 1}) for the case $\ell = 0$. Indeed, if $\overline{\diff} (A \cap (2^n \N+2 \ell)) > 1/(3 \cdot 2^{n-1})$, then also $\overline{\diff} ((A - 2 \ell) \cap 2^n \N) > 1/(3 \cdot 2^{n-1})$ and so there is an infinite set $B'$ such that $B'+B' \subset A - 2 \ell$, therefore taking $B=B'+ \ell$ we conclude that $B+B \subset A$.

   We will thus prove by induction in $n$ that whenever $\overline{\diff} (A \cap 2^n \N) >  1/(3 \cdot 2^{n-1})$, there exists $B\subset \N$ infinite so that $B+B \subset A$. Notice that the base case is given by \cref{thrm even and odd}.

    Assuming the result for $n\in \N$, we let $A \subset \N$ satisfy $\overline{\diff} (A \cap 2^{n+1} \N) >  1/(3 \cdot 2^{n})$. We define $\Tilde{A} = (A \cap 2^{n+1} \N) \cup \left( \left(A \cap 2^{n+1} \N \right)+ 2^{n}) \right)$. Then, $\Tilde{A}=\Tilde{A} \cap 2^n \N $ and 
    by part (\ref{density lemma shift part}) of \cref{density lemma} we have 
    $$\overline{\diff} (\Tilde{A} \cap 2^{n} \N) =  2 \cdot \overline{\diff} (A \cap 2^{n+1} \N) > 2/(3 \cdot 2^{n}) = 1/(3 \cdot 2^{n-1}).$$ 
    Thus by the inductive hypothesis, there exists an infinite set $B' \subset \N$ such that $B'+B'\subset \Tilde{A}$. By the pigeonhole principle, if $B'_i = \{ b \in B \colon  b \equiv i \pmod{2^{n}} \}$, $i \in \{0,\ldots,2^{n}-1\}$, then one of those $B'_i$ is infinite. Let $B'_{i_0}$ be such a set and denote it by $B''$. As $B''+B'' \subset 2^n \N +2i_0$ and $B'' + B'' \subset \Tilde{A} \subset 2^{n}\N$, it must be the case that either $i_0=0$ or $i_0=2^{n-1}$.
    
    If $i_0=0$ then $B'' + B'' \subset \Tilde{A} \cap 2^{n+1}\N = A \cap 2^{n+1}\N$ and we are done. If $i_0=2^{n-1}$ then $B'' + B'' \subset \Tilde{A} \cap (2^{n+1}\N + 2^n)=\left(A \cap 2^{n+1} \N \right)+2^n$. Therefore, setting $B = B'' - 2^{n-1}$ we deduce that $B + B \subset A \cap 2^{n+1} \N $. 

    The proof of (\ref{point 2}) is completely analogous.
\end{proof}

\medskip

\section{B+B in syndetic sets and partition regularity} \label{syndetic sets}

\cref{thrm B+B}, \cref{thrm B+B+t} and the 
examples constructed in Section \ref{sec even numbers} offer a characterization of sets
$A\subset \N$ containing $B+B$ or $B+B+1$ for some 
infinite $B\subset \N$ in terms of information 
provided by their upper and lower density. It is 
then natural to ask for a similar characterization 
in terms of upper or lower Banach density. 
We recall that the upper and lower Banach density 
of a set $A\subset \N$, denoted by $\diff^*(A)$ 
and $\diff_*(A)$, respectively, are defined by

\begin{align*}
    \diff^*(A)&=\limsup_{N-M\to \infty} \frac{|A\cap [M,N]|}{N-M} \\
    \diff_*(A)&=\liminf_{N-M\to \infty} \frac{|A\cap [M,N]|}{N-M}.
\end{align*}
It turns out that no conclusions can be drawn from information on the upper Banach density, because the first example given in \cref{subsec examples} has full upper Banach density and does not contain an infinite sumset $B+B+t \subset A$, for any shift $t\in \N$. 

So far, these results do not offer any insight on the relation between the size of a set $A\subset \N$ captured by its lower Banach density and whether it contains $B+B+t$ for infinite $B\subset \N$ and a number $t\geq 0$, apart from the obvious ones following from the results on lower density (for example, since for $A\subset \N$ one always has $\underline{\diff}(A) \geq \diff_{*}(A)$, we know that $\diff_{*}(A)>1/2$ guarantees $B+B+t \subset A$, some $t\in \{0,1\}$).

The question indirectly raised in the last paragraph is also interesting and very natural for another reason: sets with positive lower Banach density are precisely the very well$-$studied syndetic sets. We have the following proposition.

\begin{proposition}\label{syndetic set with no B+B}
There is a syndetic set $A\subset \N$ with gaps of 
length two such that $A$ does not contain any set 
$B+B+t$ for $B \subset \N$ infinite and $t\in \N$. 
\end{proposition}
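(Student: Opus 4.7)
The plan is to construct $A$ as an explicit set built from the power-of-four examples of \cref{large upper density without B+B}, decomposed according to residue classes modulo~$3$. For each $n \in \N$ I partition the dyadic block $[4^n,4^{n+1})$ into a \emph{left} part $L_n = [4^n,(2-1/n)4^n)$, a \emph{middle} part $M_n = [(2-1/n)4^n,(2+1/n)4^n)$, and a \emph{right} part $R_n=[(2+1/n)4^n, 4^{n+1})$. I then choose a permutation $(\sigma_1(n),\sigma_2(n),\sigma_3(n))$ of $(0,1,2)$ and set
\[
A \cap L_n = L_n \cap (3\N+\sigma_1(n)),\qquad A\cap M_n = M_n\cap(3\N+\sigma_2(n)),\qquad A\cap R_n = R_n\cap(3\N+\sigma_3(n)).
\]
Within each part, $A$ is an arithmetic progression of step $3$, which is syndetic with gap exactly $2$. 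The permutations $(\sigma_1(n),\sigma_2(n),\sigma_3(n))$ are chosen from the six available at each scale so that, at every boundary between consecutive parts (both within a scale and across scales), the residue classes are compatible in the sense that the gap in $A$ does not exceed $2$; this is a finite case analysis depending only on the boundary integers modulo~$3$, and one checks directly that a permutation yielding gap~$2$ always exists.

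For the no-$B+B+t$ property I argue by contradiction. Suppose $B+B+t \subset A$ for some infinite $B\subset \N$ and $t\in\N$. By the pigeonhole principle, I pass to an infinite subset $B'\subset B$ contained in a single residue class $r \pmod 3$. Then $B'+B'+t$ lies in the single residue class $s=2r+t\pmod 3$, and the containment $B'+B'+t\subset A$ forces, at every element of $B'+B'+t$, a constraint of the form $\sigma_i(n)=s$ where $(i,n)$ is the part and scale containing that element. The key combinatorial observation is that the two natural subfamilies of $B'+B'+t$—the doubled elements $\{2b+t: b\in B'\}$ and the cross sums $\{b_1+b_2+t: b_1<b_2, b_1,b_2\in B'\}$—land in different parts of the dyadic scales. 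For example, if $b_n\in L_{k_n}$ is near $4^{k_n}$, then $2b_n+t$ lies in $M_{k_n}$ (forcing $\sigma_2(k_n)=s$), whereas a cross term $b_i+b_n$ with $k_i<k_n$ lies in $L_{k_n}$ (forcing $\sigma_1(k_n)=s$). Arranging the permutations so that $\sigma_1(n)\neq \sigma_2(n)$ for every large $n$ makes these two constraints incompatible and yields the desired contradiction; analogous incompatibilities block the cases in which $B'$ lives in the middle or right parts, using constraints such as $\sigma_2(n)\neq \sigma_1(n+1)$.

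The main obstacle is reconciling the gap-$2$ alignment at all boundaries with the collection of inequalities $(\sigma_1\neq\sigma_2$, $\sigma_2(n)\neq\sigma_1(n+1)$, and their analogues$)$ that make the sum-avoiding argument go through. The saving grace is that we have six permutations available at each scale and the constraints are local: the alignment is a finite condition per boundary, and the inequalities $\sigma_i\neq\sigma_j$ only forbid three of the six permutations at each $n$. A case-by-case enumeration (or a short greedy construction) shows that one can always pick a permutation at scale $n$, given the one at scale $n-1$, so that both the boundary condition and the separation conditions are satisfied; combined with the reduction above, this produces the desired set $A$.
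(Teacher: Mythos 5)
Your high-level plan is related in spirit to the paper's proof (both partition $\N$ into three ``$B+B+t$-free'' sets governed by the base-$4$ logarithm, then intersect each with a residue class mod $3$ to force syndeticity), but the paper works with the continuous quantity $\{\tfrac12\log_2 m\}$ landing in fixed arcs $U_0,U_1,U_2$ of the torus with a small overlap $\varepsilon$, whereas you use discrete dyadic intervals with a shrinking middle sliver. That difference is not cosmetic, and your write-up has a concrete gap.

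The gap is in your ``greedy'' claim that you can pick the permutation at scale $n$ given the one at scale $n-1$. Work out the full set of separation constraints. Write $x=b+b'+t$ and $y=2b+t$; for $b'$ fixed and $b$ large, $\log_4 y - \log_4 x \to \tfrac12^{-}$. Running through the possible locations of $x$ shows that the prohibited coincidences are not just $\sigma_1(n)\ne\sigma_2(n)$ and $\sigma_2(n)\ne\sigma_1(n+1)$: one also needs $\sigma_3(n)\ne\sigma_1(n+1)$ (from $x\in R_n$, $y\in L_{n+1}$) and $\sigma_3(n)\ne\sigma_2(n+1)$ (from $x$ at the right end of $R_n$, $y\in M_{n+1}$). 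The first two cross-scale inequalities force $\sigma_1(n+1)=\sigma_1(n)$, and the third then forces $\sigma_2(n+1)=\sigma_2(n)$; so the permutation is \emph{forced to be eventually constant}, not merely constrained to a set of three out of six options. Once the permutation is constant, the gap-$2$ alignment at the boundaries $L_n\!\mid\!M_n$, $M_n\!\mid\!R_n$, $R_n\!\mid\!L_{n+1}$ depends on the residues mod $3$ of the boundary integers, which vary with $n$ and are not under the control of the (now fixed) permutation. There is no permutation choice left to reconcile this, so the greedy step you lean on does not exist. The construction can be rescued, but only by perturbing the interval endpoints by $O(1)$ at each scale to dictate their residues mod $3$ --- a degree of freedom you never invoke. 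Separately, your case analysis for ``doubled vs.\ cross sums land in different parts'' is only illustrated in one configuration; to make it airtight you would need to verify, for all three parts and for both endpoints of each part, that $\log_4 y-\log_4 x$ (which approaches $\tfrac12$ from below at rate $\sim 4^{-k}$) cannot be absorbed by the shrinking width $\sim 1/k$ of $M_k$ --- a rate comparison that is implicit but not performed. The paper avoids all of this by taking $U_i$ of fixed width on the circle and using that $U_i$ and $U_i+\theta$ are separated by a fixed positive constant, so no rate bookkeeping is needed and syndeticity follows directly from the overlap $\varepsilon$.
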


\begin{proof}
Let $\T$ be the $1$-dimensional torus $\R / \Z$ and for a real number $x$, $\{x\} $ denotes its fractional part, that is $\{x\} = x - \lfloor x \rfloor $. Let $A\subset \N$ be given by

\begin{equation} \label{eq def A}
    A = \left\{ m \in \N \colon  \{ \theta \log_2(m) \} \in U \right\}
\end{equation}
where $\theta >0$ and the sets $U$ and $U+\theta$ (taken mod 1) are 
separated by a positive distance as subsets of the torus 
$\T$. As it was remarked in \cite[Example 3.6]{Kra_Moreira_Richter_Robertson_problems}, 
$A$ does not contain any $B+B+t$ for any infinite set $B \subset \N$ and shift $t\geq 0$. Indeed, for any infinite set $B \subset \N$ and  $t\geq 0$, we can take $b' \in  B$ arbitrary and
then choose $b \in B$ sufficiently large so that the difference

\begin{equation*}
    \left\lvert \log_2 \left( \frac{2b + t}{b+b'+t} \right) -1   \right\lvert 
\end{equation*}
is less than the distance $d(U, U+ \theta)/\theta$. Multiplying by $\theta$ this implies that

\begin{equation*} 
    \left\lvert \theta\log_2 ( 2b + t) - \theta \log_2(b+b'+t) - \theta  \mod 1 \right\lvert  < d(U, U+ \theta). 
\end{equation*}
Thus if $\{\theta \log_2 (b+b' +t) \} \in U$ then $\{ \theta \log_2 (b+b' +t) + \theta \} \in U + \theta$, and by the previous equation, $\{\theta\log_2 ( 2b + t)\} \not \in U$. Therefore, $2b+t$ and $b+b'+t$ cannot be both in $A$ simultaneously.

Consider now $U_0= [0,1/3 + \varepsilon)$, $U_1= [1/3,2/3 + \varepsilon)$ and $U_2= [2/3, 1 + \varepsilon)$ where $0 <\varepsilon < 1/12 $, so that $\{U_1, U_2, U_3 \}$ is a cover of $\T$ with small overlapping. Notice that for $i \in \{0,1,2\}$, the set
\begin{equation*} 
    A_i = \left\{ m \in \N \colon  \{ 1/2 \log_2(m) \} \in U_i \right\}
\end{equation*}
does not contain any $B+B+t$ for any infinite set $B \subset \N$ and shift $t\geq 0$. Now, we define the set $A$ as
\begin{equation*}
    A = (A_0 \cap 3\N) \cup (A_1 \cap (3\N +1) ) \cup (A_2 \cap (3\N +2)).
\end{equation*}
Then $A$ is clearly syndetic, because $A_0 \cup A_1 \cup A_2 = \N$, each $A_i$ is a disjoint union of discrete intervals and each interval in $A_i$ is followed by an interval of $A_{i+1 \pmod3}$,  $i=1,2,3$. Moreover, as the sets $A_i$ overlap it is clear that the gaps of $A$ have size at most $2$, that is,
$A \cup (A-1) \cup (A-2) = \N$ (this is why we need to include a small parameter $\varepsilon>0$ in the definition of the sets $U_i \subset \T$). 

Finally $A$ does not contain any $B+B+t$ for any infinite set $B \subset \N$ and shift $t\geq 0$. Otherwise, without loss of generality, we could assume that the elements of $B$ are 
equal mod $3$, that is, there exists $j \in \{0,1,2\}$ such that for every $b \in B$, $b \equiv j \mod 3$. Then $B+B+t \subset 3\N + i $, where $i \equiv 2j + t \mod 3$. But this would mean that $B+B+t \subset A \cap (3 \N +i) \subset A_i$, which is a contradiction. 
\end{proof}

\medskip

As an immediate corollary of \cref{syndetic set with no B+B} we recover \cref{cor 3 coloring synsetic}. Indeed, using the set $A$ constructed in 
\cref{syndetic set with no B+B} we can build a 
three-coloring of $\N$ with syndetic colors and 
without any monochromatic infinite sumset $B+B$, 
because $\N= A \cup (A-1) \cup (A-2).$

\begin{remark*} \label{remark 3 colors}
    Although \cref{cor 3 coloring synsetic} is a direct consequence of \cref{syndetic set with no B+B}, it can also be proved directly by considering the sets $A_i$, $i=1,2,3$ as in the proof of \cref{syndetic set with no B+B}, but for $U_0= [0,1/3)$, $U_1= [1/3,2/3)$, and $U_2= [2/3, 1)$ and picking colors $C_1,C_2,C_3$ given by

    \begin{align*}
        C_1 &= (A_0 \cap 3\N) \cup (A_1 \cap (3\N +1) ) \cup (A_2 \cap (3\N +2)) \\
        C_2 &= (A_0 \cap (3\N+2)) \cup (A_1 \cap 3\N  ) \cup (A_2 \cap (3\N +1)) \\
        C_3 &= (A_0 \cap (3\N+1)) \cup (A_1 \cap (3\N +2) ) \cup (A_2 \cap 3\N)
    \end{align*}
\end{remark*}

\medskip

Our example in \cref{syndetic set with no B+B} is a set 
$A\subset \N$ with $\diff_{*}(A)=1/3$ that does not contain any 
infinite sumset $B+B+t$ for any $t\in \N$. By the 
trick of 
gluing to $A$ the odd integers which belong to its complement, as in the second part of \cref{large upper density without B+B}, we 
obtain a set $A'=A \cup \left(2\N+1\right)$ with $\diff_{*}
(A')=2/3$ and such that there is no infinite $B\subset \N$ so that 
$B+B \subset A'$. However, we do not yet have a
characterization in terms of information provided by the lower 
Banach density of a set of integers and so we ask the following 
question. 

\begin{question}
Let $A\subset \N$. 
\begin{enumerate}
    \item If $\diff_*(A)>1/3$, does there exist an infinite set $B\subset \N$ and a number $t\in \N$ so that $B+B+t \subset A$?
    \item If $\diff_*(A)>2/3$, does there exist an infinite set $B\subset \N$ so that $B+B\subset A$?
\end{enumerate}
\end{question}

\medskip

\section{Digression on monochromatic B+B in 2-colorings} \label{2 colorings}   

As we mentioned in the introduction, the following question 
of C.J. Owings remains open. 

\begin{question}\cite[Problem E2494]{Owing_problems} \label{Owings}
Is it true that, no matter how $\N$ is partitioned into two sets, 
one of the sets must contain $B+B$ for some infinite $B\subset \N$?
\end{question}

\begin{remark} \label{equivalent coloring formulation}
    By the parity argument repeated in the previous section, this question is equivalent to finding a partition in the even numbers such that one of the sets contains a $B+B$ for some infinite $B \subset \N$. But, by translation, this is  equivalent to the analogous statement for the odd numbers and shifted patterns $B+B+1$. Joining both statements we notice that the question is equivalent to whether any coloring of the natural numbers $\N$ in two colors, must contain a monochromatic $B+B+t$ for some infinite set $B\subset \N$ and a number $t\in \N$. 
\end{remark}

To the best of the authors' knowledge, there has been no progress towards an answer yet. Even \cref{equivalent coloring formulation} seems to be new. Using \cref{B+B+t d_ and d^}, we are able to prove the following result which reduces the problem to a narrower case by adding density restrictions on both sets in the partition. Notice 
that if the answer to Owings' question turns out to be negative, 
then our result sheds some light to how one could potentially find 
a counterexample.

\begin{proposition} \label{2-coloring progress}
Let $\N = C_1 \cup C_2$ be a partition of $\N$. If there is no 
infinite $B\subset \N$, no number $t\in \N$ and $i\in \{1,2\}$ 
such that 
$B+B+t \subset C_i$, then it must be the case that 
\begin{equation} \label{densities a}
\overline{\diff}(C_i)+\underline{\diff}(C_i)=1,    
\end{equation}  
for both $i=1,2$ and so, in particular, 
\begin{equation} \label{densities b}
\overline{\diff}(C_1)=\overline{\diff}(C_2)\ \text{and}\ \underline{\diff}(C_2)=\underline{\diff}(C_1).      
\end{equation}  
\end{proposition}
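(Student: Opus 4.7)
The plan is to deduce the conclusion directly from Theorem~\ref{B+B+t d_ and d^} together with the elementary relations between the upper and lower densities of complementary sets.

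First I would observe that the hypothesis of the proposition, combined with Theorem~\ref{B+B+t d_ and d^}, immediately implies
\begin{equation*}
\overline{\diff}(C_i) + \underline{\diff}(C_i) \leq 1 \quad \text{for } i=1,2.
\end{equation*}
Indeed, if this inequality were violated for some $i$, the theorem would furnish an infinite $B \subset \N$ and $t \in \{0,1\}$ with $B+B+t \subset C_i$, contradicting the assumption.

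Next I would invoke the standard identity for complementary subsets of $\N$: since $C_2 = \N \setminus C_1$, we have $\underline{\diff}(C_2) = 1 - \overline{\diff}(C_1)$, and symmetrically $\underline{\diff}(C_1) = 1 - \overline{\diff}(C_2)$. Summing the two inequalities above and substituting these relations gives
\begin{equation*}
\bigl(\overline{\diff}(C_1) + \overline{\diff}(C_2)\bigr) + \bigl(\underline{\diff}(C_1) + \underline{\diff}(C_2)\bigr) \leq 2,
\end{equation*}
but each parenthesized sum equals exactly $1$ (each term $\overline{\diff}(C_i) + \underline{\diff}(C_{3-i}) = 1$). Hence the left-hand side equals $2$, which forces equality in both of the original inequalities: $\overline{\diff}(C_i) + \underline{\diff}(C_i) = 1$ for $i=1,2$, which is \eqref{densities a}.

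Finally, to obtain \eqref{densities b}, I would substitute $\underline{\diff}(C_1) = 1 - \overline{\diff}(C_2)$ into $\overline{\diff}(C_1) + \underline{\diff}(C_1) = 1$ to get $\overline{\diff}(C_1) = \overline{\diff}(C_2)$, and similarly substitute $\overline{\diff}(C_1) = 1 - \underline{\diff}(C_2)$ to obtain $\underline{\diff}(C_1) = \underline{\diff}(C_2)$. There is no real obstacle here; the entire content of the proposition is packaged in Theorem~\ref{B+B+t d_ and d^}, and the rest is a two-line bookkeeping exercise with complementary densities.
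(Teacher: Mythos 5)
Your proof is correct and follows essentially the same route as the paper: both invoke Theorem~\ref{B+B+t d_ and d^} to get $\overline{\diff}(C_i)+\underline{\diff}(C_i)\leq 1$, then use the complementary identities $\overline{\diff}(C_1)+\underline{\diff}(C_2)=\overline{\diff}(C_2)+\underline{\diff}(C_1)=1$ to force equality and deduce \eqref{densities a} and \eqref{densities b}.
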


\begin{proof}
By \cref{B+B+t d_ and d^} we know that if 
$\overline{\diff}(C_i)+\underline{\diff}(C_i)>1$, then there is 
$B\subset \N$ infinite and $t\in \N$ such that $B+B+t \subset C_i$ 
and so, under our assumption, it must be that 
\begin{equation} \label{densities 0}
\overline{\diff}(C_i)+\underline{\diff}(C_i) \leq 1
\end{equation}
for both 
$i=1,2$. As $\{C_1, C_2\}$ forms a partition of $\N$ we have 
that $\N \setminus C_1=C_2$ and $\N \setminus C_2=C_1$, so that 
\begin{equation} \label{densities 1}
\overline{\diff}(C_1)+\underline{\diff}(C_2) = \overline{\diff}(C_2)+\underline{\diff}(C_1) = 1,
\end{equation}
which follows trivially from the definitions of upper and lower 
density. Summing the two leftmost sides in \eqref{densities 1} we 
get
\begin{equation} \label{densities 2}
\overline{\diff}(C_1)+\underline{\diff}(C_1) + \overline{\diff}(C_2) + \underline{\diff}(C_2)= 2   
\end{equation}
and this forces \eqref{densities 0} to take the form 
$$\overline{\diff}(C_i)+\underline{\diff}(C_i) = 1, $$
for both $i=1,2$. This is \eqref{densities a} and along with the 
equalities in \eqref{densities 1} we also obtain \eqref{densities b}.
\end{proof}

\begin{remark*}
Using \cref{thrm B+B+t} one can also place 
further constrictions on the values of the densities of the sets involved in the two-coloring. Indeed, combining \cref{2-coloring progress} with 
\cref{thrm B+B+t}, if the answer of \cref{Owings} is negative, then $\overline{\diff}(C_1)=\overline{\diff}(C_2) 
\in [1/2 , 2/3]$ and $\underline{\diff}(C_1)=\underline{\diff}(C_2) \in [1/3 , 1/2]$. Given \cref{prop 2n}, an analogous observation can be made considering the relative upper and lower densities of the sets $C_1$ and $C_2$ over the set $2^n \N + \ell $ for $n \geq 1$ and $\ell \in \{0, \ldots, 2^n-1\}$, which makes the construction of such a coloring even more restrictive. 
\end{remark*}

\medskip

\bibliographystyle{abbrv} 



\end{document}